


%

\documentclass[11pt]{amsart}
\usepackage{amssymb} 
\usepackage[mathscr]{eucal} 
\usepackage{xypic}   
\usepackage{amsmath} 
\usepackage[all, 2cell]{xy}
\usepackage{epsfig}
\usepackage{amscd}
\usepackage{mathrsfs}
\usepackage{tikz-cd}


\def\Q{{\mathbb Q}}
\def\Z{{\mathbb Z}}

\def\C{{\mathbb C}}

\def\F{{\mathbb F}}
\def\G{{\mathbb G}}



\newcommand{\Ql}{{{\overline{\mathbb Q}}_l}}

\newcommand{\Ee}{{\mathscr E}}
\newcommand{\Tt}{{\mathscr T}}

\newcommand{\Uu}{{\mathscr U}}

\newcommand{\Bb}{{\mathscr B}}
\newcommand{\Pp}{{\mathscr P}}

\newcommand{\Xx}{{\mathscr X}}

\newcommand{\Spec}{{\rm Spec}}

\newcommand{\Bun}{{\rm Bun}}

\newcommand{\BB}{{\Bb un_{G, X, \F_q}}}
\newcommand{\BBX}{{\Bb un^{\vartheta}_{G, X, \F_q}}}
\newcommand{\BBBX}{{\Bb un^{\vartheta}_{G, X, \overline{\F}_q}}}
\newcommand{\BBO}{{\Bb un}}
\newcommand{\BBBS}{{\Bb un}_{G, X, S}}

\newcommand{\BG}{{\Bb G}}

\newcommand{\fq}{{{\overline{\F}_q}}}
\newcommand{\qa}{{{\overline{\Q}_l}}}

\DeclareFontFamily{OT1}{pzc}{}
\DeclareFontShape{OT1}{pzc}{m}{it}{<->s*[1.21]pzcmi7t}{}
\DeclareMathAlphabet{\mathpzc}{OT1}{pzc}{m}{it}

\def\Gal{\mathrm{Gal}}

\def\Spec{\mathrm{Spec}}

\def\Frob{\mathrm{Frob}}

\def\Pic{\mathrm{Pic}}
\def\Jac{\mathrm{Jac}}

\def\GL{\mathrm{GL}}
\def\SL{\mathrm{SL}}

\def\dim{\mathrm{dim\,}}

\def\Pic{\mathrm{Pic}}
\def\cO{{\mathcal O}}

\def\E{{\mathcal E}}

\theoremstyle{definition}
\newtheorem{theorem}{Theorem}[section]

\newtheorem{proposition}[theorem]{Proposition}

\newtheorem{definition}[theorem]{Definition}

\newtheorem{example}[theorem]{Example}

\theoremstyle{remark}
\newtheorem{remark}[theorem]{Remark}

\numberwithin{equation}{section}



\begin{document}

\title[Actions of Frobenius morphisms for moduli of principal bundles]
{On actions of Frobenius morphisms for moduli stacks of principal bundles over algebraic curves}

\author[Abel Castorena]
{Abel Castorena}
\address{Centro de Ciencias Matem\'aticas
\\ Universidad Nacional Aut\'onoma de M\'exico (UNAM) Campus Morelia
\\Morelia, Michoac\'an 
C.P. 58089, M\'exico
}
\email{abel@matmor.unam.mx}

\author[Frank Neumann]
{Frank Neumann}
\address{Dipartimento di Matematica 'Felice Casorati'\\
Universit\`a di Pavia\\
via Ferrata 5, 27100 Pavia, Italy}
\email{frank.neumann@unipv.it}

\subjclass{Primary 14D23, Secondary 14F20, 14G15}

\keywords{algebraic stacks, moduli of principal bundles,
Frobenius morphisms, trace formula}

\begin{abstract}
{We study the various arithmetic and geometric Frobenius morphisms on the moduli stack of principal bundles over a smooth projective algebraic curve and determine explicitly their actions on the $l$-adic cohomology of the moduli stack in terms of Chern classes.}
\end{abstract}
\maketitle


\section*{Introduction}

\noindent The moduli stack of principal $G$-bundles over an algebraic curve plays a fundamental role in algebraic geometry, number theory and mathematical physics. It appears eminently for example in relation with the geometric Langlands correspondence, the Weil conjecture on Tamagawa numbers for function fields or in gauge theory (see for example \cite{AB, Fr, BD, KW, DS, T, GaLu, HeSch}). In particular, the cohomology of the moduli stack is of fundamental interest and has been studied extensively. The fundamental work of Atiyah and Bott \cite{AB} implies in particular that for any semisimple complex reductive algebraic group $G$ the rational cohomology algebra of the moduli stack of principal $G$-bundles on a given Riemann surface $X$ is freely generated by the K\"unneth components of the Chern classes of the universal principal $G$-bundle over the moduli stack. This is basically a translation in terms of algebraic stacks and stack cohomology of the classical results obtained by Atiyah and Bott \cite{AB}, who originally used equivariant cohomology and Morse theory. A stacky exposition can be found in the article of Teleman \cite{T} and in the book of Gaitsgory and Lurie \cite{GaLu} on their proof of  Weil's Tamagawa number conjecture for function fields. Furthermore, Heinloth and Schmitt \cite{HeSch} gave an alternative proof also working in any characteristic. They showed in particular that the $l$-adic cohomology algebra of the moduli stack of principal $G$-bundles over an algebraic curve over a field in  characteristic $p$ is also freely generated as in the complex case by the K\"unneth components of Chern classes of the universal principal $G$-bundle over the moduli stack. 

In this article we study the various arithmetic and geometric Frobenius endomorphisms which act naturally on the moduli stack of principal $G$-bundles over an algebraic curve in finite characteristic and determine their explicit actions on the $l$-adic cohomology algebra of the moduli stack in terms of Chern classes. 

It turns out that besides the absolute arithmetic and geometric Frobenius morphism acting on the moduli stack, which in fact can be defined for general Artin algebraic stacks \cite{Be, Be2, Be3, Su}, there exist also an induced arithmetic and an induced geometric Frobenius morphism constructed by extending the arithmetic and geometric Frobenius morphism of the fixed algebraic curve to the whole moduli stack of principal $G$-bundles over the given curve.

All these four Frobenius  morphisms induce actions on the $l$-adic cohomology of the moduli stack which can explicitly be described in terms of cohomology classes. As one would expect, the absolute arithmetic and geometric Frobenius morphisms respectively the induced arithmetic and geometric Frobenius morphisms are inverse to each other as morphisms of algebraic stacks.

The geometry of these induced Frobenius morphisms on the moduli stack and its cohomology is rather mysterious. While for the absolute Frobenius morphisms, Behrend \cite{Be, Be2, Be3} derived a general Grothendieck-Lefschetz type trace formula, it is not known for example if there exists an adequate analogue of the trace formula which expresses the geometric and arithmetic properties of the induced Frobenius morphisms and especially would indicate which particular geometric structures are counted by these and what their arithmetic relevance is.  What can go wrong with a naive trace formula for the induced Frobenius morphisms already in basic examples for the case of vector bundles was discussed before by the second author and Stuhler \cite{NS} and a more systematic analysis is given in the last section of this article.\\

\noindent{\bf Outline and organization of the article.}In the first section we recall the main properties of the moduli stack of principal $G$-bundles over an algebraic curve in characteristic $p$ and its $l$-adic cohomology algebra. The second section features the constructions of the absolute and induced arithmetic and geometric Frobenius morphisms on the moduli stack. The third and final section gives the explicit calculations of the actions of the various Frobenius morphisms on the $l$-adic cohomology algebra and an analysis of the convergence of the formal trace for iterations of the induced arithmetic Frobenius and various compositions of the induced and absolute Frobenius morphism.

\section{The Moduli Stack of Principal $G$-bundles Over an Algebraic Curve and Its Cohomology}

In this section we recollect some fundamental facts about the moduli stack of principal $G$-bundles over an algebraic curve and the calculation of its $l$-adic cohomology algebra.

\subsection{The Moduli stack of principal G-bundles over an algebraic curve}

Let us first recall some important geometric properties of the moduli stack of principal $G$-bundles over a fixed algebraic curve. We will follow here the accounts in \cite{Be, Be2} and \cite{HeSch}.

\begin{definition}
Let $X$ be a smooth projective algebraic curve of genus $g$ over a noetherian scheme $S$ and $G/S$ be a reductive group scheme of finite rank over $S$. The {\it moduli stack} $\BBBS$ {\it of principal $G$-bundles over} $X$ is defined via its groupoid of sections as follows:
For a given scheme $U/S$ over $S$, the groupoid $\BBBS(U)$ of $U$-valued points of $\BBBS$ is the groupoid of principal $G$-bundles over $X\times_S U$ and their isomorphisms.
\end{definition}

We have the following fundamental fact (see \cite[Prop. 4.4.6, Cor.~4.5.2]{Be} or \cite[Prop. 3.4]{LaSo}):

\begin{proposition}
The moduli stack  $\BBBS$ is an Artin algebraic stack, locally of finite type and smooth of relative dimension
$(g-1)\dim(G)$ over $S$.
\end{proposition}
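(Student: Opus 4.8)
The plan is to establish the three defining properties of an Artin algebraic stack for $\BBBS$ --- descent, a representable finite-type diagonal, and a smooth surjective atlas --- and then to extract smoothness and the relative dimension from deformation theory. The descent condition is routine: being a principal $G$-bundle is an fppf-local property and bundles with their gluing data descend effectively (here one uses that $G/S$ is affine, being reductive), so $U \mapsto \BBBS(U)$ is a stack for the fppf topology. For the diagonal, given two principal $G$-bundles $P, Q$ on $X \times_S U$, the isomorphism functor $\Isom_U(P,Q)$ is a torsor under $\underline{\Aut}(P)$, a form of $G$, hence representable by a scheme affine over $X \times_S U$; pushing forward along the proper projection $X \times_S U \to U$ shows that $\Isom_U(P,Q)$ is representable by a separated $U$-scheme of finite type, so that the diagonal of $\BBBS$ is representable, separated and of finite type.

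The hard part will be the construction of a smooth atlas. First I would choose a faithful representation $G \hookrightarrow \GL_n$; by Matsushima's criterion the homogeneous space $\GL_n/G$ is affine. Extension of structure group along this embedding defines a morphism $\BBBS \to \Bun_{\GL_n}$ to the stack of rank $n$ vector bundles on $X$, and I would show this morphism is representable and of finite type: its fibre over a vector bundle $E$ parametrizes reductions of structure group of $E$ to $G$, i.e. sections of the associated bundle $E \times^{\GL_n} (\GL_n/G) \to X \times_S U$ with affine fibre $\GL_n/G$, and such a section space is representable by a finite-type $U$-scheme via a Hom-scheme construction relative to the projective morphism $X \to S$. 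It then suffices to produce a smooth atlas for $\Bun_{\GL_n}$, for which I would invoke the classical relative Quot-scheme construction: after a twist every suitably bounded rank $n$ bundle $E$ admits a surjection $\O_X(-N)^{\oplus M} \twoheadrightarrow E$ for $N \gg 0$, and the open locus of $\Quot$ parametrizing locally free quotients with vanishing higher cohomology maps smoothly and surjectively onto $\Bun_{\GL_n}$. Composing with the representable morphism $\BBBS \to \Bun_{\GL_n}$ then yields a smooth atlas for $\BBBS$; the boundedness built into the Quot construction shows both stacks are locally of finite type over $S$. I expect assembling these two ingredients --- the Quot presentation of $\Bun_{\GL_n}$ and the representability of structure-group reductions --- to be the principal obstacle.

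Finally I would deduce smoothness and the dimension from deformation theory, which for a principal $G$-bundle $P$ is controlled by the adjoint bundle $\mathrm{ad}(P) = P \times^G \mathfrak{g}$, where $\mathfrak{g} = \mathrm{Lie}(G)$: the infinitesimal automorphisms are $H^0(X, \mathrm{ad}(P))$, the first-order deformations are $H^1(X, \mathrm{ad}(P))$, and obstructions lie in $H^2(X, \mathrm{ad}(P))$. Since $X$ is a curve this obstruction group vanishes, so deformations are unobstructed and $\BBBS \to S$ is smooth. The dimension at $[P]$ is then $h^1(X, \mathrm{ad}(P)) - h^0(X, \mathrm{ad}(P)) = -\chi(X, \mathrm{ad}(P))$, and Riemann--Roch on the genus $g$ curve gives $\chi(X, \mathrm{ad}(P)) = \deg(\mathrm{ad}(P)) + \dim(G)(1-g)$. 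The degree $\deg(\mathrm{ad}(P))$ vanishes because $\det \circ \mathrm{Ad}$ is the trivial character of $G$ (the roots occur in pairs $\pm\alpha$), so that $\det(\mathrm{ad}(P))$ is the trivial line bundle. Hence $\dim_{[P]} \BBBS = (g-1)\dim(G)$ independently of $P$, which is the asserted relative dimension.
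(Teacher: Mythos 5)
The paper offers no proof of this proposition at all: it is quoted as a ``fundamental fact'' with pointers to Behrend's thesis (Prop.~4.4.6, Cor.~4.5.2) and to Laszlo--Sorger (Prop.~3.4). Your sketch is, in outline, exactly the argument carried out in those references: fppf descent for $G$-bundles (using affineness of $G/S$), representability of $\Isom(P,Q)$ by pushing an affine form of $G$ forward along the projective morphism $X\times_S U\to U$ via Grothendieck's Hom-scheme for coherent sheaves, a $\Quot$-scheme atlas for $\Bun_{\GL_n}$ combined with representability of reductions of structure group through the affine fibration $E\times^{\GL_n}(\GL_n/G)$, and the deformation-theoretic count $\dim_{[P]}=h^1(X,\mathrm{ad}(P))-h^0(X,\mathrm{ad}(P))=-\chi(X,\mathrm{ad}(P))=(g-1)\dim(G)$, where $\deg\mathrm{ad}(P)=0$ because the sum of the roots vanishes. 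Note that you correctly use the stacky dimension $h^1-h^0$, with the automorphisms contributing negatively; this is where the formula is insensitive to $P$.

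Two steps deserve more care than your sketch gives them, though neither is a genuine obstruction. First, the existence of a faithful representation $G\hookrightarrow\GL_n$ over an arbitrary noetherian base $S$ is not automatic (it holds after localizing on $S$, and over a normal base by a theorem of Thomason), and the affineness of $\GL_n/G$ in characteristic $p$ is not Matsushima's criterion per se but its positive-characteristic extension due to Haboush and Richardson; since algebraicity, smoothness and relative dimension are all local on $S$, you should say explicitly that you reduce to the case where such a representation exists. Second, the open locus of $\Quot$ you propose as an atlas must itself be shown to be smooth over $S$ and to map smoothly (not merely surjectively) to $\Bun_{\GL_n}$; the standard argument uses vanishing of $H^1(X,E(N))$ on that locus, i.e.\ the same cohomological input as your final deformation step, and the atlas is then the disjoint union over all twists $N$, which is what makes the stack only \emph{locally} of finite type. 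With these two points filled in, your proposal is a correct rendering of the proof the paper delegates to its references.
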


In general, the stack $\BBBS$ might not be connected, but its connected components can be described as follows (see \cite[Prop.~5]{DS}, \cite[Prop. 2.1.1]{HeSch}):

\begin{proposition}
If $S=\Spec(\overline{k})$, where $\overline{k}$ is an algebraic closure of a field $k$ or if $G$ is a split reductive algebraic group, then the set $\pi_0 (\BBBS)$ of connected components of  $\BBBS$ is in bijective correspondence with the set of elements of the fundamental group $\pi_1(G)$ of $G$.
\end{proposition}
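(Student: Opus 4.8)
The plan is to pin down $\pi_0(\BBBS)$ by producing an explicit map $c\colon \pi_1(G) \to \pi_0(\BBBS)$, proving it is surjective by elementary reduction-of-structure-group arguments, and proving it is injective via the uniformization presentation of $\BBBS$ through the affine Grassmannian. I work in the geometric case $S = \Spec(\overline{k})$; the split reductive case runs identically, splitness being exactly what guarantees that the cocharacter lattice $X_*(T)$ and its coroot sublattice $\Z\Phi^\vee$, and hence $\pi_1(G) = X_*(T)/\Z\Phi^\vee$, are constant over the base with no Galois twisting. Note at the outset that the naive invariant recording the degrees of the line bundles attached to characters $\chi \in X^*(G)$ is too coarse: it vanishes when $G$ is semisimple even though $\pi_1(G)$ need not, so a genuinely finer construction is required.

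First I would define $c$ and prove surjectivity. Fix a maximal torus $T \subset B \subset G$ and a point $x \in X$, and send a cocharacter $\lambda \in X_*(T)$ to the component of the $G$-bundle $\lambda_*\mathcal O_X(x)$ obtained by extending the structure group of $\mathcal O_X(x)$ along $\lambda\colon \Gm \to T \hookrightarrow G$. To see this descends to $\pi_1(G)$, I would check that replacing $\lambda$ by $\lambda + \alpha^\vee$ for a coroot $\alpha^\vee$ does not change the component: $\alpha^\vee$ factors through a homomorphism $\SL_2 \to G$, and since $\pi_1(\SL_2) = 0$ the stack $\BBun_{\SL_2}$ is connected, so the two bundles lie in a common connected family. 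For surjectivity I would use the Drinfeld--Simpson reduction theorem: every $G$-bundle over $X$ reduces to $B$, and the projection $\BBun_B \to \BBun_T$ has connected fibres built from the unipotent radical, so $\pi_0(\BBBS)$ is a quotient of $\pi_0(\BBun_T) \cong X_*(T)$; since $\BBun_T$ is a product of Picard stacks, every component of which contains some $\lambda_*\mathcal O_X(x)$, the map $c$ is onto.

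The crux is injectivity, for which I would invoke uniformization (Drinfeld--Simpson, Beauville--Laszlo). With $F = \overline{k}((t))$ and $\mathcal O = \overline{k}[[t]]$ the local field and integers at $x$, there is an isomorphism of stacks $\BBBS \cong [G_{\mathrm{out}} \backslash \mathrm{Gr}_G]$, where $\mathrm{Gr}_G = G(F)/G(\mathcal O)$ is the affine Grassmannian and $G_{\mathrm{out}} = \mathrm{Map}(X \setminus x,\ G)$. Because $G$ is connected and $X \setminus x$ is a connected affine curve, the ind-group $G_{\mathrm{out}}$ is connected, so the quotient induces a bijection $\pi_0(\mathrm{Gr}_G) \xrightarrow{\sim} \pi_0(\BBBS)$; combined with the standard computation $\pi_0(\mathrm{Gr}_G) \cong \pi_1(G)$ and the observation that the component of $\mathrm{Gr}_G$ indexed by $[\lambda]$ maps to the component of $\lambda_*\mathcal O_X(x)$, this shows $c$ is a bijection.

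I expect the uniformization step to be the main obstacle. Everything else --- the coroot relation, the torus computation, and surjectivity --- is comparatively formal, but the presentation $\BBBS \cong [G_{\mathrm{out}} \backslash \mathrm{Gr}_G]$ rests on the nontrivial local/generic triviality of $G$-bundles away from a point, which must be available in the given characteristic, while the connectedness of $G_{\mathrm{out}}$ together with $\pi_0(\mathrm{Gr}_G) \cong \pi_1(G)$ are the statements that genuinely force $\pi_0(\BBBS)$ to be exactly $\pi_1(G)$ rather than some further quotient. I would therefore concentrate the effort on verifying these inputs, after which the bijection $c$ follows.
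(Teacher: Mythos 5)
Your overall architecture (surjectivity via Drinfeld--Simpson reduction to $B$ and $\pi_0(\BBun_T)\cong X_*(T)$, injectivity via uniformization) is a recognized route, but the injectivity half has two genuine gaps. First, the presentation $\BBBS\cong [G_{\mathrm{out}}\backslash \mathrm{Gr}_G]$ at a single point is simply \emph{false} for reductive $G$ with nontrivial central torus, and the proposition (as you yourself note by claiming the split reductive case ``runs identically'') covers exactly that generality: already for $G=\G_m$ and $g\geq 1$ one has $\Pic(X\setminus\{x\})\cong \Pic^0(X)\neq 0$, so not every line bundle trivializes off $x$ and the map $\mathrm{Gr}_{\G_m}\to \BBun_{\G_m}$ is not an epimorphism. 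The Drinfeld--Simpson triviality theorem you invoke is a statement about \emph{semisimple} groups; to cover reductive $G$ you must either treat the cocenter $G\to G/[G,G]$ separately by degree considerations, or replace uniformization altogether by a locally constant invariant $\BBBS\to\pi_1(G)$ --- which is in fact how the proof cited by the paper (\cite[Prop.~2.1.1]{HeSch}, following \cite{DS} and work of Hoffmann) obtains injectivity. Second, even in the semisimple case your claim that $G_{\mathrm{out}}=\mathrm{Map}(X\setminus x, G)$ is connected ``because $G$ is connected and $X\setminus x$ is a connected affine curve'' is false precisely in the non--simply connected situations where it carries all the weight: for $G=\mathrm{PGL}_2$, $g\geq 1$, $\mathrm{char}\neq 2$, pick a nontrivial $N\in\Pic(X\setminus x)[2]$; by the Steinitz classification over the Dedekind ring $\Oo(X\setminus x)$ one has $N\oplus N\cong \Oo\oplus\Oo$, and the resulting automorphism of the trivial $\P^1$-bundle is a point of $G_{\mathrm{out}}$ whose lifting obstruction in $H^1(X\setminus x,\mu_2)$ is the nonzero class of $N$; since that obstruction is locally constant in families, this point does not lie in the identity component.

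What your argument actually needs is weaker than connectedness, namely that $\pi_0(G_{\mathrm{out}})$ acts trivially on $\pi_0(\mathrm{Gr}_G)\cong\pi_1(G)$, i.e.\ that the Kottwitz invariant at $x$ vanishes on $G_{\mathrm{out}}$. This is true, but it requires a separate degree/product-formula argument (in the $\mathrm{PGL}_2$ example above: any extension of $N$ to $\widetilde N\in\Pic(X)$ has $\widetilde N^{2}\cong\Oo(mx)$ with $m=2\deg\widetilde N$ even, so the valuation of the determinant of a local lift is even), and no such argument appears in your proposal. A smaller circularity: deducing the coroot relation from ``$\BBun_{\SL_2}$ is connected'' invokes the $\SL_2$ instance of the very proposition; this is harmless only because the simply connected case admits an independent elementary proof (e.g.\ via extensions of line bundles, or via uniformization, which \emph{is} unproblematic there), and you should say so. For comparison, the paper itself does not prove the statement but cites \cite[Prop.~5]{DS} and \cite[Prop.~2.1.1]{HeSch}; the cited proof avoids both of your problematic inputs by combining $B$-reductions with a well-defined locally constant invariant valued in $\pi_1(G)$, which is the repair I would recommend for your injectivity step.
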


As $\BBBS$ is smooth, it follows also that its connected components are irreducible algebraic stacks. 

Let $k$ be a field and for every $\vartheta\in \pi_1(G_{\overline{k}})$ let $\BBO^{\vartheta}_{G, X, \overline{k}}$ be the corresponding connected component of $\BBO_{G, X, \overline{k}}$. We have a decomposition:
$$\BBO_{G, X, \overline{k}}\cong \coprod_{\vartheta\in\pi_1(G_{\overline{k}})} \BBO^{\vartheta}_{G, X, \overline{k}}\;\;.$$  

We will later need to compare the moduli stack of principal $G$-bundles over a fixed algebraic curve with the classifying stack of all principal $G$-bundles, so we recall here briefly the definitions of quotient and classifying stacks. We refer to \cite{LMB} for the most general constructions and their basic geometric properties.

\begin{definition} Let $Z$ be a smooth scheme over a noetherian scheme $S$ and $G/S$ be a reductive group scheme of finite rank over $S$ together with a (right) $G$-action $\mu: Z\times_S G\rightarrow Z$. The {\it quotient stack} $[Z/G]$ is defined via its groupoid of sections as follows: For a given scheme $U/S$ over $S$, the groupoid $[Z/G](U)$ of $U$-valued points of $[Z/G]$ is the groupoid of principal $G$-bundles $P$ over $U$ together with a $G$-equivariant morphism $\alpha: P\rightarrow Z$ and isomorphisms of this data. For $Z=S$ equiped with the trivial $G$ action, we call $\BG:=[S/G]$ the {\it classifying stack} of $G$. 
\end{definition}

It turns out that the quotient stack $[Z/G]$ and in particular the classifying stack $\BG$ under the above conditions are again smooth algebraic stacks, which are locally of finite type (see \cite{LMB}).
 
\subsection{Cohomology of the moduli stack of principal $G$-bundles over an algebraic curve}

Let $\Xx$ be an algebraic stack, which is smooth and locally of finite type over $S=\Spec(k)$ for a field $k$. The {\it $l$-adic cohomology} defined over the lisse-\'etale site $\Xx_{\text{lis-\'et}}$ of $\Xx$ is given as the limit of the $l$-adic cohomologies of all the open substacks $\Uu$ of finite type of the algebraic stack  $\Xx_{\overline{k}}=\Xx\times_{\Spec(k)}\Spec({\overline{k}})$ (see \cite{HeSch}), i.e. we have
$$H^*( \Xx_{\overline{k}}, \Ql)=\lim_{\substack{{\Uu\subset\Xx_{\overline{k}}},\\ 
\text{open, finite type}}}\hspace*{-0.5cm}H^*(\Uu, \Ql).$$
As a general reference for cohomology of algebraic stacks we refer to the book of Laumon and Moret-Bailly \cite{LMB} and especially for $l$-adic cohomology and its main properties to the general formalism of cohomology functors as developed by Behrend \cite{Be2}, \cite{Be3} and in subsequent work by Laszlo and Olsson \cite{LO1}, \cite{LO2}.

From now on we will assume that $S=\Spec(\F_q)$ for the finite field $\F_q$ with $q=p^s$ elements for a given prime $p$ and that $G$ is a semisimple algebraic group over $\F_q$.  Let us recall the following theorem essentially due to Steinberg \cite{St} (see for example \cite[Prop. 2.2.5]{HeSch} for a proof):

\begin{theorem}\label{CohBG}
Let $G$ be a semisimple algebraic group of rank $r$ over the field $\F_q$. There exist integers $d_1, \ldots, d_r$ and roots of unity 
$\varepsilon_1, \ldots, \varepsilon_r$ such that:
\begin{itemize}
\item[(i)] The number of $\F_q$-rational points of $G$ is given as:
$$\#G(\F_q)= q^{\dim G}\prod_{i=1}^r (1-\varepsilon_i q^{-d_i})$$
\item[(ii)] Let $\BG$ be the classifying stack of $G$. Then there is an isomorphism of graded $\overline{\Q}_l$-algebras
$$H^*(\BG_{\bar{\F}_q}, \overline{\Q}_l)\cong {\overline\Q}_l[c_1, \ldots, c_r]$$
where the $c_i\in H^{2d_i}(\BG_{\overline{\F}_q}, \overline{\Q}_l)$ are generators in even degrees. 
\end{itemize}
\end{theorem}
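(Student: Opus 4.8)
The plan is to prove (ii) first, by reducing the cohomology of $\BG$ to that of a maximal torus and invoking the invariant theory of the Weyl group, and then to read off the Frobenius eigenvalues and feed them into a Grothendieck--Lefschetz type count to obtain (i). First I would fix a maximal torus $T\subseteq G$ and a Borel subgroup $B=T\ltimes U$ defined over $\F_q$, with Weyl group $W=N_G(T)/T$ of order $|W|$. The inclusions and the projection $B\twoheadrightarrow T$ induce morphisms of classifying stacks relating $\Bb T$, $\Bb B$ and $\BG$. Since $U$ is unipotent and we work with $\overline{\Q}_l$-coefficients for $l\neq p$, the map $\Bb B\to\Bb T$ is a cohomology isomorphism, while $\Bb B\to\BG$ is a fibration with fibre the flag variety $G/B$.

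The key step is then the identification
$$H^*(\BG_{\overline{\F}_q},\overline{\Q}_l)\cong H^*(\Bb T_{\overline{\F}_q},\overline{\Q}_l)^{W}.$$
For this I would use that $G/B$ admits a Bruhat (Schubert) cell decomposition, so its $l$-adic cohomology is free, concentrated in even degrees, of total rank $|W|$, with geometric Frobenius acting on the degree-$2j$ part by $q^{j}$. A Leray--Hirsch argument forces the Leray spectral sequence of $G/B\to\Bb T\to\BG$ to degenerate and exhibits $H^*(\Bb T_{\overline{\F}_q},\overline{\Q}_l)$ as a free module of rank $|W|$ over $H^*(\BG_{\overline{\F}_q},\overline{\Q}_l)$; matching this with Chevalley's theorem that a polynomial ring is free of rank $|W|$ over its $W$-invariants yields the displayed isomorphism. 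Next I would compute the right-hand side: since $T$ becomes $\Gm^{\,r}$ after base change and $H^*(\BGm_{\overline{\F}_q},\overline{\Q}_l)\cong\overline{\Q}_l[c]$ with $c$ the first Chern class in degree $2$ (geometric Frobenius acting by $q$), one gets that $H^*(\Bb T_{\overline{\F}_q},\overline{\Q}_l)$ is the symmetric algebra on $X^*(T)\otimes\overline{\Q}_l$ placed in degree $2$, a polynomial ring in $r$ generators on which $W$ acts as a finite reflection group. By the Chevalley--Shephard--Todd theorem the invariant subalgebra is again polynomial, freely generated by homogeneous elements $c_1,\dots,c_r$ in degrees $2d_1,\dots,2d_r$, where the $d_i$ are the fundamental degrees of $W$; this proves (ii). The Frobenius weights come out at the same time: the $\F_q$-structure makes geometric Frobenius act on the degree-$2$ piece as $q\cdot\sigma$ for a finite-order automorphism $\sigma$ commuting with the $W$-action, so each homogeneous generator $c_i$ can be chosen a $\sigma$-eigenvector and is then a Frobenius eigenvector with eigenvalue $\varepsilon_i q^{d_i}$ for a root of unity $\varepsilon_i$.

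Finally I would deduce (i). By Borel's transgression theorem the polynomial description of $H^*(\BG)$ is equivalent to an exterior description of the cohomology of the group itself,
$$H^*(G_{\overline{\F}_q},\overline{\Q}_l)\cong\Lambda(y_1,\dots,y_r),$$
with $y_i$ in odd degree $2d_i-1$ transgressing to $c_i$ and carrying the same Frobenius eigenvalue $\varepsilon_i q^{d_i}$, so in particular $\dim G=\sum_{i}(2d_i-1)$. Applying the Grothendieck--Lefschetz trace formula to the smooth variety $G$ and computing the alternating trace of Frobenius on an exterior algebra factor by factor produces a product of the form $\prod_i\bigl(1-(\text{eigenvalue})\bigr)$; Poincar\'e duality, which multiplies through by $q^{\dim G}$ and inverts the eigenvalues, then rewrites this as
$$\#G(\F_q)=q^{\dim G}\prod_{i=1}^{r}\bigl(1-\varepsilon_i q^{-d_i}\bigr),$$
the roots of unity $\varepsilon_i$ being exactly those attached to the generators above (as each $\varepsilon_i$ is a root of unity, replacing it by its inverse is harmless for the statement). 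Equivalently one can stay on the stack: by Lang's theorem $\#\BG(\F_q)=1/\#G(\F_q)$, and Behrend's trace formula for $\BG$ combined with the polynomial description from (ii) recovers the same product after summing the resulting geometric series. The \textbf{main obstacle} is the first part, namely the rigorous degeneration of the flag-bundle spectral sequence and the identification $H^*(\BG)\cong H^*(\Bb T)^{W}$ together with the determination of the Frobenius action; once that structural input and the weights are secured, both (i) and (ii) follow formally.
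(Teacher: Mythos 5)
Your proposal is correct, and it is essentially the paper's own approach: the paper gives no argument of its own here, deferring entirely to Steinberg \cite{St} and \cite[Prop. 2.2.5]{HeSch}, and your chain --- the flag-bundle reduction $H^*(\BG_{\overline{\F}_q},\overline{\Q}_l)\cong H^*(\Bb T_{\overline{\F}_q},\overline{\Q}_l)^W$ via Leray--Hirsch, Chevalley--Shephard--Todd producing the degrees $d_i$, the finite-order twist $\sigma$ producing the roots of unity $\varepsilon_i$, and the Lefschetz/Lang count of $\#G(\F_q)$ --- is precisely the classical argument contained in those references. The only blemishes are cosmetic: for non-split $G$ (the theorem assumes only semisimple, though quasi-splitness over $\F_q$ follows from Lang, as you implicitly use) the geometric Frobenius acts on $H^{2j}((G/B)_{\overline{\F}_q},\overline{\Q}_l)$ by $q^j$ times a finite-order permutation of Schubert classes rather than by $q^j$ alone, which is harmless for your freeness argument, and the inversion ambiguity $\varepsilon_i\mapsto\varepsilon_i^{-1}$ you flag is indeed immaterial because $\sigma$ is a finite-order transformation of a $\Q$-rational graded vector space, so its eigenvalue multiset in each degree is stable under inversion.
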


Because $G$ is assumed here to be semisimple, it follows that $d_i>1$ for all $i=1, 2, \ldots, r.$

The universal principal $G$-bundle $\Pp^{univ}$ over the algebraic stack $X_\fq\times \BBBX$ defines a morphism of algebraic stacks
$$u: X_\fq\times \BBBX \rightarrow {\BG_\fq}.$$
The characteristic classes of 
$\Pp^{univ}$ are then defined as the pullbacks
$$c_i(\Pp^{univ}):= u^*(c_i), \,\,i=1, 2,\ldots, r,$$
where the $c_i$ are the standard generators of the cohomology algebra of the classifying stack $\BG_\fq$ as in the theorem of Steinberg.

We can now use the K\"unneth decomposition for $l$-adic cohomology of algebraic stacks to determine the K\"unneth components for the Chern classes
$c_i(\Pp^{univ})$ of the universal bundle $\Pp^{univ}$.

For this, let us first choose a basis for the $l$-adic cohomology of the algebraic curve $X$ given by the generators $1\in H^0(X_\fq, \qa)$, $(\gamma_j)_{j=1, 2, \ldots 2g}\in H^1(X_\fq, \qa)$ and the fundamental class $[X]\in H^2(X_\fq, \qa)$. We
have then the following K\"unneth decomposition for the Chern classes of the universal principal $G$-bundle $\Pp^{univ}$:
$$c_i(\Pp^{univ})=1\otimes a_i + \sum_{j=1}^{2g} \gamma_j \otimes b_i^{(j)} + [X]\otimes f_i,$$
where the classes  $a_i\in H^{2d_i}(\BBBX, \qa)$, 
$b_i^{(j)}\in H^{2d_i-1}(\BBBX, \qa)$ and 
$f_{i} \in H^{2(d_i-1)}(\BBBX, \qa)$ 
are the so-called {\em Atiyah-Bott classes}. 

These characteristic classes depend on the element $\vartheta\in \pi_1(G_\fq)$ and therefore on the connected components of the moduli stack $\BBO_{G, X, \F_q}$, but for simplicity we will drop $\vartheta$ from the notation of the classes.

The cohomology algebra of the moduli stack is then given as follows \cite{HeSch}:
\begin{theorem}
Let $G$ be a semisimple algebraic group of rank $r$ over the field $\F_q$ and $\vartheta\in \pi_1(G_\fq)$. Let $X$ be a smooth projective algebraic curve of genus $g$ over $\F_q$ and $\BBO_{G, X, \F_q}$ be the moduli stack of principal $G$-bundles on $X$. There is an
isomorphism of graded $\qa$-algebras
$$
\begin{aligned}
H^*(\BBBX, \qa)& \cong
\qa[a_1, \ldots, a_r]\otimes\qa[f_1, \ldots, f_r]\\
&\otimes\Lambda_{\qa}(b_1^{(1)}, \ldots, b_1^{(2g)}, \ldots,
b_r^{(1)}, \ldots, b_r^{(2g)}),
\end{aligned}
$$
\noindent where the generators $a_i, b_i^{(j)}$ and $f_i$ for $i=1, 2, \ldots, r$ are the Atiyah-Bott classes.
\end{theorem}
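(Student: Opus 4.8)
The plan is to prove that the natural homomorphism from the free graded-commutative $\qa$-algebra on the Atiyah--Bott classes into $H^*(\BBBX,\qa)$ is an isomorphism; this splits into showing that these classes \emph{generate} the cohomology and that they satisfy \emph{no relations}. The generators are already in hand: by construction they are the K\"unneth components of the pullbacks $c_i(\Pp^{univ})=u^*(c_i)$ along the classifying morphism $u\colon X_\fq\times\BBBX\to\BG_\fq$, and Theorem~\ref{CohBG} identifies $H^*(\BG_\fq,\qa)$ with the polynomial algebra $\qa[c_1,\dots,c_r]$. The conceptual input I would exploit is the identification of the moduli stack with the mapping stack, $\BBBX\cong\underline{\mathrm{Map}}(X,\BG)^{\vartheta}$, which turns the problem into the computation of the cohomology of a mapping stack whose target has free polynomial cohomology concentrated in even degrees.

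First I would establish generation. The conceptual model is the cell decomposition of a genus $g$ surface, $X\simeq(\bigvee_{2g}S^1)\cup e^2$, realized algebraically through the affine-Grassmannian uniformization of $\BBBX$ (Beauville--Laszlo), which is the algebraic counterpart of the loop-group picture of Atiyah--Bott. This produces an evaluation morphism $\mathrm{ev}_x\colon\BBBX\to\BG_\fq$ together with the auxiliary pieces attached to the $1$-cells and the $2$-cell, and the three resulting cohomological contributions match the three families of Atiyah--Bott classes exactly. The evaluation $\mathrm{ev}_x^*$ carries the $c_i$ to the polynomial generators $a_i$ in degree $2d_i$; each of the $2g$ one-dimensional pieces contributes the cohomology of $G$, which for semisimple $G$ is the exterior algebra on primitive classes in degrees $2d_i-1$, giving the odd generators $b_i^{(j)}$; and the two-dimensional piece contributes the cohomology of $\Omega G$, a polynomial algebra on classes in degrees $2d_i-2$, giving the $f_i$. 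Tracking these through the associated Serre and Eilenberg--Moore spectral sequences shows that the Atiyah--Bott classes generate $H^*(\BBBX,\qa)$, so the map out of the free algebra is surjective.

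To prove freeness I would then compare Poincar\'e series. Granting surjectivity, it suffices to show that the Poincar\'e series of $H^*(\BBBX,\qa)$ equals that of the conjectured free algebra,
$$
P(t)=\prod_{i=1}^{r}\frac{(1+t^{2d_i-1})^{2g}}{(1-t^{2d_i})(1-t^{2d_i-2})},
$$
since a degreewise surjection of graded vector spaces of finite type with equal Poincar\'e series is an isomorphism. The right-hand side can be computed independently by the Harder--Narasimhan--Atiyah--Bott recursion: one stratifies $\BBBX$ by instability type, expresses the cohomology of each unstable stratum through its Levi subgroup, and sums the contributions of an equivariantly perfect stratification. This is the $l$-adic, characteristic $p$ incarnation of the Atiyah--Bott count, and it is closely tied to the volume formula for $\#G(\F_q)$ recorded in Theorem~\ref{CohBG}(i). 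Matching the two series forces the generating surjection to be an isomorphism and simultaneously produces the tensor decomposition into polynomial and exterior factors asserted in the statement.

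The main obstacle is that this topological picture must be realized purely algebraically over $\fq$, where no genuine cell structure is available: one must pass to the limit over the open substacks of finite type defining the $l$-adic cohomology of the non-quasi-compact stack $\BBBX$, control that limit, and --- most delicately --- prove that the relevant spectral sequences degenerate and that the Harder--Narasimhan stratification is cohomologically perfect in positive characteristic. Once the odd classes $b_i^{(j)}$ enter, parity no longer rules out differentials, so the genuine work lies precisely in the perfectness of the stratification and the resulting Poincar\'e-series identity; this is the technical heart carried out by Heinloth and Schmitt, and it is where I would expect essentially all of the effort to go.
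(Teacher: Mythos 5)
Your generation step is faithful to the strategy behind the paper's citation: the paper itself gives no argument beyond pointing to Heinloth--Schmitt \cite{HeSch}, whose proof is indeed the algebraic incarnation of the Atiyah--Bott mapping-space computation via Beauville--Laszlo uniformization, so that half of your sketch matches the intended proof. The genuine gap is in your freeness step. The Harder--Narasimhan recursion cannot compute the Poincar\'e series of $H^*(\BBBX,\qa)$ ``independently'': perfectness of the stratification yields an identity expressing that series as the sum of the series of the \emph{semistable} open stratum plus contributions from the semistable loci of the Levi subgroups, and the top term is an unknown of exactly the same kind as the quantity you are trying to determine. The recursion can only be solved in the other direction, for the semistable series once the series of the whole stack is known (this inversion is due to Laumon--Rapoport, and it is also the direction in which Heinloth--Schmitt use perfectness --- as an application of the theorem, not as an ingredient of its proof). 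In Atiyah--Bott the independent input that closes the loop is the homotopy-theoretic computation of $H^*(\mathrm{Map}(X,BG))$; over $\F_q$ the only available substitute is arithmetic, namely a trace formula on the locally-of-finite-type stack $\BBBX$ together with the Siegel/Tamagawa volume formula, and for general semisimple $G$ over function fields that volume statement is precisely Weil's Tamagawa number conjecture, whose proof by Gaitsgory--Lurie rests on this very cohomology theorem. So as written your dimension count is circular; it is available non-circularly only for split $G$ via Harder's theorem, or for $\GL_n$, $\SL_n$ as in \cite{NS}, and even then it additionally requires purity of $H^*(\BBBX,\qa)$ and convergence of the trace on the non-quasi-compact stack, neither of which you address. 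Note also that Theorem~\ref{CohBG}(i) is Steinberg's formula for the finite group $G(\F_q)$, not a mass formula for the groupoid $\BBBX(\F_q)$, so it cannot play the role you assign to it.

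What Heinloth--Schmitt actually do is make your first step carry the whole weight: the Atiyah--Bott classes restrict to free generators of the cohomology of the fibres in the uniformization and level-structure fibrations (the affine Grassmannian contributing the $f_i$, the loop-group pieces the $a_i$ and the $b_i^{(j)}$), and a Leray--Hirsch-type degeneration argument then gives freeness directly, with no independent Poincar\'e-series input at all. Replacing your second step by this argument --- or, if you insist on the arithmetic route, restricting to split $G$ and invoking Harder's Tamagawa theorem together with an explicit purity argument --- would repair the proof.
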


\begin{proof} This is the main theorem of \cite{HeSch}. It is a stacky version of the fundamental result of Atiyah and Bott \cite{AB} over the complex numbers (see also \cite{T}). For alternative proofs in the case of vector bundles, i.~e.~  for $G=\GL_n, \SL_n$ see also \cite{NS} or \cite{N}.
\end{proof}

\section{The Actions of the Arithmetic and Geometric Frobenius Morphisms on the Moduli Stack}

We will now define, relate and investigate the various arithmetic and geometric Frobenius morphisms of the moduli stack of principal $G$-bundles over a fixed algebraic curve. 

\subsection{The induced arithmetic Frobenius morphism}

Let us first have a look at the induced arithmetic Frobenius morphisms acting on the moduli stack $\BBX$ and its cohomology algebra, which is induced from the absolute Frobenius morphism on the algebraic curve $X$.

Let  $X$ be a scheme over $\Spec(\F_q)$ and let
$$\Frob: \overline{\F}_q \rightarrow \overline{\F}_q, \,\, a\mapsto a^q$$
be the classical Frobenius morphism given by a generator of the Galois group $\Gal(\overline{\F}_q/\F_q)$ of the field extension $\overline{\F}_q/\F_q$. 
It induces an endomorphism of affine schemes
$$\Frob_{\Spec(\overline{\F}_q)}: \Spec(\overline{\F}_q) \rightarrow \Spec(\overline{\F}_q)$$
which naturally induces an endomorphism of schemes via base change
$$\Frob_X:=id_{X} \times \Frob_{\Spec(\overline{\F}_q)}: X_{\overline{\F}_q} \rightarrow X_{\overline{\F}_q}.$$

Now more specially let $X$ be a smooth projective algebraic curve of genus $g$ over the field $\F_q$. By the above considerations we have the absolute arithmetic Frobenius endomorphisms associated to it:
$$\Frob_X: X_\fq\rightarrow X_\fq.$$

The pullback along this arithmetic Frobenius morphism $\Frob_X$ of the algebraic curve $X_\fq$ therefore induces an endofunctor of the groupoid of sections given by:
\begin{align*}
\psi(U): \BBBX(U)\rightarrow \BBBX(U)\\
\E\mapsto \psi(U)(\E):=(\Frob_X \times id_U)^* (\E)
\end{align*}

\noindent for each object $U$ of the category $(Sch/\fq)$ of schemes over the field $\fq$. 

These endofunctors assemble an endomorphism of algebraic stacks
$$\psi: \BBBX\rightarrow \BBBX.$$

We will call this endomorphism $\psi$ the {\em induced arithmetic Frobenius morphism} of the moduli stack. It induces an endormorphism in $l$-adic cohomology
$$\psi^*: H^*(\BBBX; \qa)\rightarrow H^*(\BBBX; \qa).$$

There is a relation between the arithmetic Frobenius morphism of the algebraic curve and the induced arithmetic Frobenius morphism of the moduli stack in form of a naturality property for the pullback of the universal principal bundle:

\begin{proposition}[Naturality]\label{Prop1}
There is a natural isomorphism
$$(\Frob_X\times id_{\BBBX})^*(\Pp^{univ})\cong (id_{X_\fq}\times \psi)^*
(\Pp^{univ})$$
where $\Pp^{univ}$ is the universal principal $G$-bundle on the stack $X_\fq\times \BBBX$.
\end{proposition}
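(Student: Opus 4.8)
The plan is to deduce the asserted isomorphism formally from the universal property of the pair $(\BBBX,\Pp^{univ})$, recognizing both sides as two descriptions of the single principal $G$-bundle on $X_\fq\times\BBBX$ that is classified by the morphism $\psi$. No geometric computation should be needed; the content is entirely that of the $2$-categorical Yoneda lemma applied to the moduli stack.

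First I would record the defining universal property of the moduli stack as an equivalence of groupoids, natural in the $\fq$-scheme $T$,
$$\Hom(T,\BBBX)\;\simeq\;\{\,\text{principal }G\text{-bundles on }X_\fq\times T\,\},$$
under which a morphism $f\colon T\to\BBBX$ is sent to $(id_{X_\fq}\times f)^*(\Pp^{univ})$ and the identity $id_{\BBBX}$ corresponds to $\Pp^{univ}$ itself. Next I would apply this equivalence in the case $T=\BBBX$ and $f=\psi$. On the one hand it shows directly that $\psi$ classifies the bundle $(id_{X_\fq}\times\psi)^*(\Pp^{univ})$, the right-hand side of the statement. On the other hand, $\psi$ is defined through its functor of points by the rule $\E\mapsto(\Frob_X\times id_T)^*(\E)$; evaluating the functor $\psi$ on $\BBBX$-points at the universal object $\Pp^{univ}$ (which is the universal element representing $id_{\BBBX}$) exhibits the same morphism $\psi$ as being classified by $(\Frob_X\times id_{\BBBX})^*(\Pp^{univ})$, the left-hand side. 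Since these are the two images of the one morphism $\psi$ under the (unique) representing equivalence, they are canonically isomorphic, which is the desired natural isomorphism.

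The substantive points I would check before invoking this formal argument are all of a bookkeeping nature. I must verify that the endofunctors $\psi(U)$ really assemble into a single $1$-morphism of stacks, which reduces to the compatibility of the pullback $(\Frob_X\times id_U)^*$ with base change in $U$ together with the pseudo-functoriality of pullback; and I must confirm the naturality in $T$ of the representing equivalence above, so that evaluating a morphism at the universal object agrees, up to a canonical $2$-isomorphism, with pulling the universal bundle back along it. The hard part is therefore not any construction but the coherent tracking of these canonical $2$-isomorphisms — in particular, checking that $\Frob_X$ acts only on the $\overline{\F}_q$-factor and commutes with the projection to $\BBBX$, so that the two composite morphisms $X_\fq\times\BBBX\to\BBBX$ appearing on the two sides are matched compatibly with the universal family $\Pp^{univ}$.
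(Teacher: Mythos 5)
Your proposal is correct and follows essentially the same route as the paper: both arguments identify $\psi$ as the classifying morphism of the bundle $(\Frob_X\times id_{\BBBX})^*(\Pp^{univ})$ and then invoke the universal property of $(\BBBX,\Pp^{univ})$ (applied at the stack point $\Tt=\BBBX$) to conclude that this bundle is isomorphic to $(id_{X_\fq}\times\psi)^*(\Pp^{univ})$. Your write-up merely makes explicit the $2$-Yoneda bookkeeping that the paper compresses into the phrase ``representability implies,'' which is a fair expansion rather than a different proof.
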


\begin{proof} Representability implies that for any algebraic stack $\Tt$ over $\fq$ a principal $G$-bundle $\Pp$ on the stack $X_{\fq}\times \Tt$
is given by a morphism of stacks
$$u: \Tt \rightarrow \BBBX$$
such that $\Pp$ is isomorphic to the pullback of the universal principal $G$-bundle 
$\Pp^{univ}$ on the stack $X_\fq\times \BBBX$
$$\Pp\cong (id_{X_\fq} \times u)^* (\Pp^{univ}).$$
Applying this in particular to the pullback $(\Frob_X\times
id_{\BBBX})^*(\Pp^{univ})$ of the universal principal $G$-bundle $\Pp^{univ}$, which defines the endomorphism  $\psi: \BBBX\rightarrow \BBBX$, implies the statement.
\end{proof}

\subsection{The absolute arithmetic Frobenius morphism}

We have another arithmetic Frobenius morphism, which acts on a general algebraic stack $\Xx$ over $\Spec(\F_q)$ and its $l$-adic cohomology, the absolute Frobenius morphism of the algebraic stack. For this, let
$$\Frob: \overline{\F}_q \rightarrow \overline{\F}_q, \,\, a\mapsto a^q$$
denote the classical Frobenius morphism given by a generator of the Galois group $\Gal(\overline{\F}_q/\F_q)$ of the field extension $\overline{\F}_q/\F_q$. 
It induces an endomorphism of schemes
$$\Frob_{\Spec(\overline{\F}_q)}: \Spec(\overline{\F}_q) \rightarrow \Spec(\overline{\F}_q)$$
which naturally also extends to an endomorphism of algebraic stacks defined as
$$\Frob_{\Xx}:=id_{\Xx} \times \Frob_{\Spec(\overline{\F}_q)}: \Xx_{\overline{\F}_q} \rightarrow \Xx_{\overline{\F}_q}.$$
This endomorphism $\Frob_{\Xx}$ of algebraic stacks is called the {\em absolute arithmetic Frobenius morphism}.
It again induces an endomorphism on the $l$-adic cohomology of the algebraic stack
$$\Frob_{\Xx}^*: H^*(\Xx_{\overline{\F}_q}; \qa)\rightarrow H^*(\Xx_{\overline{\F}_q}; \qa).$$

An important example is the action of the absolute arithmetic Frobenius morphism $\Frob_{{\BG}_{\overline{\F}_q}}$ on the classifying stack $\BG_{\overline{\F}_q}$ and its induced action $\Frob_{{\BG}_{\overline{\F}_q}}^*$ on the $l$-adic cohomology $H^*(\BG_{\overline{\F}_q}, \overline{\Q}_l)$. In this case we have with the integers $d_1, \ldots, d_r$ and roots of unity $\varepsilon_1, \ldots \varepsilon_r$ as given in theorem \ref{CohBG} the following:

\begin{theorem}\label{AFrob}
The absolute arithmetic Frobenius $\Frob_{{\BG}_{\overline{\F}_q}}^*$ acts on the $l$-adic cohomology algebra of the classifying stack $\BG_{\overline{\F}_q}$
as follows:
$$
\begin{aligned}
\Frob_{{\BG}_{\overline{\F}_q}}^*(c_i)&= \varepsilon_iq^{-d_i}
c_i,
\end{aligned}
$$
\noindent where the $c_i$ for $i=1, 2, \ldots, r$ are the Chern classes.
\end{theorem}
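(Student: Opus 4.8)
The plan is to reduce the computation on the classifying stack $\BG_{\overline{\F}_q}$ to the known action of the Frobenius on the étale cohomology of a Grassmannian or an approximation of the classifying space $BG$. First I would recall that the Chern classes $c_i \in H^{2d_i}(\BG_{\overline{\F}_q}, \overline{\Q}_l)$ arise, via the theorem of Steinberg, from the cohomology of the classifying stack, and that by point (i) of Theorem \ref{CohBG} the integers $d_i$ and the roots of unity $\varepsilon_i$ are precisely the data controlling the eigenvalues of the geometric Frobenius on $H^*(\BG_{\overline{\F}_q}, \overline{\Q}_l)$. The essential input is the Grothendieck--Lefschetz trace formula together with the point count $\#G(\F_q) = q^{\dim G}\prod_{i=1}^r(1 - \varepsilon_i q^{-d_i})$: the zeta function of $\BG$ over $\F_q$ encodes the Frobenius eigenvalues, and comparing its Euler product expansion with the polynomial structure $\overline{\Q}_l[c_1,\ldots,c_r]$ of the cohomology forces each generator $c_i$ to be an eigenvector.

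The key steps, in order, are as follows. I would first note that since $H^*(\BG_{\overline{\F}_q}, \overline{\Q}_l)$ is a free polynomial algebra on the $c_i$, the action of the \emph{geometric} Frobenius (which is the eigenvalue-computing normalization appearing in the trace formula) is determined on all of cohomology once we know it on each generator $c_i$. By the standard relationship between point counts and Frobenius eigenvalues, the eigenvalue of the geometric Frobenius on the degree-$2d_i$ generator $c_i$ is $\varepsilon_i^{-1} q^{d_i}$, matching the factor $(1 - \varepsilon_i q^{-d_i})$ in the point count of Theorem \ref{CohBG}(i) after accounting for the $q^{\dim G}$ normalization coming from $\dim \BG = -\dim G$. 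I would then invoke the fundamental fact that the absolute arithmetic Frobenius $\Frob_{\BG_{\overline{\F}_q}}$ and the geometric Frobenius are mutually inverse as endomorphisms of the base-changed stack, so that $\Frob_{\BG_{\overline{\F}_q}}^*$ acts by the \emph{inverse} eigenvalue, namely $\varepsilon_i q^{-d_i}$, on $c_i$. Finally I would verify that this assignment is compatible with the algebra structure, which is automatic since it is determined on generators.

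The main obstacle, and the step deserving the most care, will be pinning down the precise normalization and the exact sense in which the arithmetic Frobenius is inverse to the geometric Frobenius at the level of cohomology classes, so that the eigenvalue comes out as $\varepsilon_i q^{-d_i}$ rather than its inverse or a $q$-twisted variant. The subtlety is that $\BG$ is a stack of negative dimension and its cohomology is concentrated in even degrees with eigenvalues that are powers of $q$ times roots of unity; keeping track of which of the several competing Frobenius morphisms in play (absolute arithmetic versus geometric, and the Galois-theoretic $a \mapsto a^q$ versus its geometric counterpart) acts, and in which direction, is where sign and exponent errors would creep in. I would resolve this by anchoring the computation to Theorem \ref{CohBG}(i): the point-counting formula is unambiguous, and reading off the factor $\varepsilon_i q^{-d_i}$ directly from $\prod_{i=1}^r(1 - \varepsilon_i q^{-d_i})$ fixes the normalization of the arithmetic Frobenius action on each $c_i$ without ambiguity.
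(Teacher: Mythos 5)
Your overall architecture --- compute the geometric Frobenius eigenvalues first and then invert, using that the absolute arithmetic and geometric Frobenius morphisms are mutually inverse on cohomology --- is consistent with the paper's conventions, but the central step of your argument has a genuine gap. You propose to read off the eigenvalue of Frobenius on each generator $c_i$ from the point count $\#G(\F_q)=q^{\dim G}\prod_{i=1}^r(1-\varepsilon_i q^{-d_i})$ via the Grothendieck--Lefschetz trace formula. The trace formula, however, produces a single number for each finite extension $\F_{q^n}$, namely the alternating sum of traces over all of $H^*(\BG_{\overline{\F}_q},\qa)$ (up to the normalization $q^{-\dim G}$ coming from $\dim \BG=-\dim G$), and this data at best determines the multiset of Frobenius eigenvalues on the total cohomology --- and even that only after purity and the convergence of the infinite sum have been secured. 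It can never show that the specific classes $c_i$ are eigenvectors, nor match eigenvalues to individual generators. This is not a pedantic point: for $G=\SL_4$ the generators sit in cohomological degrees $4,6,8$, so $H^8(\BG_{\overline{\F}_q},\qa)$ is two-dimensional, spanned by the square of the degree-$4$ generator and the degree-$8$ generator; the trace of Frobenius on this space cannot distinguish the asserted diagonal action from one with a nontrivial off-diagonal (Jordan) term, since traces are blind to nilpotents, and groups with repeated exponents (e.g. $\mathrm{Spin}_8$, with $d_i=2,4,4,6$) make even the assignment of the $\varepsilon_i$ to individual generators indeterminate from trace data alone. Frobenius-semisimplicity on $H^*(\BG)$ and the existence of generators that are simultaneously eigenvectors are precisely the content of the theorem, so the step you describe as ``fixing the normalization without ambiguity'' assumes what it needs to prove. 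There is also a circularity risk: the convergence of the trace of the arithmetic Frobenius on the infinite-dimensional algebra $H^*(\BG_{\overline{\F}_q},\qa)$, needed to even state your comparison, is established in the literature by first computing these eigenvalues, not the other way around.

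For comparison, the paper does not prove the theorem internally at all; it cites \cite[Prop.~2.2.5]{HeSch}, \cite{Be2} and \cite{Su}, where the argument runs in the opposite direction from yours. One identifies $H^*(\BG_{\overline{\F}_q},\qa)$ with the Weyl invariants of $\mathrm{Sym}(X^*(T)\otimes\qa(-1))$ for a maximal torus $T$, on which the arithmetic Frobenius acts by $q^{-1}$ on each Tate twist composed with the finite-order automorphism determined by the $\F_q$-form of $G$; Steinberg's theorem diagonalizes this finite-order action on a suitable system of homogeneous invariant generators, producing the roots of unity $\varepsilon_i$ and the eigenvalue $\varepsilon_i q^{-d_i}$ on $c_i$. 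The point count in Theorem \ref{CohBG}(i) is then a corollary of this eigenvalue computation, not an input to it. To repair your approach, replace the trace-formula input by this explicit invariant-theoretic description of the Frobenius action; the inversion step you describe at the end (passing from geometric to arithmetic eigenvalues) is correct and matches the paper.
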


\begin{proof} For a proof see for example \cite[Prop. 2.2.5]{HeSch}, \cite{Be2} or \cite{Su}.
\end{proof}

We are especially interested to determine the action of the arithmetic Frobenius endomorphism $\Frob_{\BBBX}$ on the moduli stack $\BBX$ and its induced action 
$$\Frob_{\BBBX}^*: H^*(\BBBX; \qa)\rightarrow H^*(\BBBX; \qa)$$
on the $l$-adic cohomology algebra.

\subsection{The induced geometric Frobenius morphism} 
The moduli stack of principal $G$-bundles also inherits in a natural way an induced geometric Frobenius morphism from the absolute geometric Frobenius acting on the algebraic curve itself.

Let us first recall the absolute geometric Frobenius morphism for a scheme $X$ over $\Spec(\F_q)$ (see for example \cite{M}).  It is an endomorphism of schemes defined as the identity on geometric points and as the $q$-th power operation on the structure sheaf, i.e. it is given as:
$$F_X: (X, \cO_X)\rightarrow (X, \cO_X),\, F_X=(id_X, f\mapsto f^q).$$

We also get an absolute geometric Frobenius morphism on the scheme $X_\fq=X\times_{\Spec(\F_q)} \Spec(\overline{\F}_q)$ via base change:
 $$\overline{F}_X=F_X\times id_{\Spec(\overline{\F}_q)}: X_\fq\rightarrow X_\fq.$$

Therefore given a smooth projective algebraic curve $X$ of genus $g$ over the field $\F_q$, we have the absolute geometric Frobenius endomorphisms associated to it:
$$\overline{F}_X: X_\fq\rightarrow X_\fq.$$

The pullback along this geometric Frobenius morphism $\overline{F}_X$ of the algebraic curve $X_\fq$ then again induces an endofunctor of the groupoid of sections given by
\begin{align*}
\varphi(U): \BBBX(U)\rightarrow \BBBX(U)\\
\E\mapsto \varphi(U)(\E):=(\overline{F}_X \times id_U)^* (\E)
\end{align*}
\noindent for each object $U$ of the category $(Sch/\fq)$ of schemes over the field $\fq$. 

These endofunctors give rise again to an endomorphism of algebraic stacks
$$\varphi: \BBBX\rightarrow \BBBX.$$

We will call this endomorphism $\varphi$ the {\em induced geometric Frobenius morphism} of the moduli stack. It again induces an endormorphism in $l$-adic cohomology
$$\varphi^*: H^*(\BBBX; \qa)\rightarrow H^*(\BBBX; \qa).$$

The relation between the geometric Frobenius morphism of the algebraic curve and the induced geometric Frobenius morphism of the moduli stack is given again by a naturality property for the pullback of the universal principal bundle on the moduli stack:

\begin{proposition}[Naturality]\label{Prop}
There is a natural isomorphism 
$$(\overline{F}_X\times id_{\BBBX})^*(\Pp^{univ})\cong (id_{X_\fq}\times \varphi)^*
(\Pp^{univ})$$
where $\Pp^{univ}$ is the universal principal $G$-bundle on the stack $X_\fq\times \BBBX$.
\end{proposition}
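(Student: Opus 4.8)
The plan is to prove this exactly as in the arithmetic case of Proposition \ref{Prop1}, replacing the arithmetic Frobenius $\Frob_X$ by the geometric Frobenius $\overline{F}_X$ and the induced morphism $\psi$ by $\varphi$. The entire content rests on the representability property of the moduli stack already invoked above: for any algebraic stack $\Tt$ over $\fq$, a principal $G$-bundle $\Pp$ on $X_\fq \times \Tt$ is classified by a morphism of stacks $u: \Tt \to \BBBX$ with $\Pp \cong (id_{X_\fq} \times u)^*(\Pp^{univ})$, this correspondence being bijective up to natural isomorphism.

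First I would take $\Tt = \BBBX$ and consider the principal $G$-bundle $(\overline{F}_X \times id_{\BBBX})^*(\Pp^{univ})$ on $X_\fq \times \BBBX$. By representability it is classified by a morphism of stacks $\BBBX \to \BBBX$, unique up to $2$-isomorphism. I would then verify that this classifying morphism is precisely $\varphi$. This is immediate from the definition of $\varphi$: on $U$-points, applying the functor $\varphi(U)$ to the tautological bundle on $X_\fq \times U$ determined by a map $U \to \BBBX$ returns the pullback $(\overline{F}_X \times id_U)^*$ of that bundle, which is exactly the behaviour of the morphism classifying $(\overline{F}_X \times id_{\BBBX})^*(\Pp^{univ})$. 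Feeding the identity $id_{\BBBX}$, equivalently $\Pp^{univ}$ itself, into this description yields the desired natural isomorphism $(id_{X_\fq} \times \varphi)^*(\Pp^{univ}) \cong (\overline{F}_X \times id_{\BBBX})^*(\Pp^{univ})$.

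The one step requiring genuine care — and the only place where the geometric Frobenius differs cosmetically from the arithmetic one — is checking that $\E \mapsto (\overline{F}_X \times id_U)^*(\E)$ assembles into an honest morphism of algebraic stacks, i.e. that it is pseudo-natural in $U$. Concretely, for a morphism $U' \to U$ one must produce coherent comparison isomorphisms between $(\overline{F}_X \times id_{U'})^*$ applied after restriction and restriction applied after $(\overline{F}_X \times id_U)^*$; these arise from the canonical base-change isomorphisms for pullback of $G$-bundles along the commuting square built from $\overline{F}_X$ and $U' \to U$, together with the usual cocycle compatibility. Since $\overline{F}_X$ is a fixed endomorphism of $X_\fq$ and the pullback of a principal $G$-bundle along any morphism is again a principal $G$-bundle, no flatness or finiteness hypothesis on $\overline{F}_X$ is required, so the verification is formal. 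I expect this $2$-functoriality bookkeeping to be the only real, if routine, obstacle; once it is in place the isomorphism of the proposition follows directly from representability and holds by the very construction of $\varphi$.
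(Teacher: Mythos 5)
Your proposal is correct and follows essentially the same route as the paper: both proofs rest on the representability of $\BBBX$, identifying $(\overline{F}_X\times id_{\BBBX})^*(\Pp^{univ})$ as the bundle whose classifying morphism is, by the very definition of $\varphi$, the induced geometric Frobenius, whence the isomorphism with $(id_{X_\fq}\times \varphi)^*(\Pp^{univ})$. The only difference is that you explicitly verify the pseudo-naturality of $\E\mapsto(\overline{F}_X\times id_U)^*(\E)$ in $U$, a routine check the paper leaves implicit in its construction of $\varphi$.
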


\begin{proof} Representability implies that for any algebraic stack $\Tt$ over the field $\fq$ a principal $G$-bundle $\Pp$ on the stack $X_{\fq}\times \Tt$
is given by a morphism of stacks
$$u: \Tt \rightarrow \BBBX$$
such that $\Pp$ is isomorphic to the pullback of the universal principal $G$-bundle 
$\Pp^{univ}$ on the stack $X_\fq\times \BBBX$, i.e.
$$\Pp\cong (id_{X_\fq} \times u)^* (\Pp^{univ}).$$
Applying this in particular to the pullback $(\overline{F}_X\times
id_{\BBBX})^*(\Pp^{univ})$ of the universal principal $G$-bundle $\Pp^{univ}$, which defines the endomorphism  $\varphi: \BBBX\rightarrow \BBBX$ implies the statement.
\end{proof}

As we will need it later let us recall here also how the geometric Frobenius morphism of the algebraic curve acts explicitly on its $l$-adic cohomology.

\begin{theorem}[Weil, Deligne]\label{Weil}
Let $X$ be a smooth projective curve of genus $g$ over the field $\F_q$ and let $X_\fq=X\times_{\Spec(\F_q)}
\Spec(\overline{\F}_q)$ be the associated curve defined over the algebraic closure $\overline{\F}_q$. Then we have:
$$
\begin{aligned}
H^0(X; \overline{\Q}_l) &= \overline{\Q}_l\cdot 1,\\
H^1(X; \overline{\Q}_l) &= \bigoplus_{i=1}^{2g} \overline{\Q}_l
\cdot \alpha_i,\\
H^2(X; \overline{\Q}_l) &=
\overline{\Q}_l\cdot [X],\\
H^i(X;
\overline{\Q}_l)&=0, \,\, \mathit{if}  \,\, i\geq 3.
\end{aligned}
$$
Here $[X]$ is the fundamental class and the $\alpha_i$ are eigenclasses under the
action of the geometric Frobenius morphism
$$\overline{F}^*_X: H^*(X, \overline{\Q}_l)
\rightarrow  H^*(X, \overline{\Q}_l)$$ given as
$$
\begin{aligned}
\overline{F}^*_X(1) &=1,\\
\overline{F}^*_X([X]) &= q [X],\\
\overline{F}^*_X(\alpha_i) &= \lambda_i \alpha_i\,\,
(i=1,2,\ldots 2g),
\end{aligned}
$$
where each $\lambda_i\in\overline{\Q}_l$ is an algebraic integer with $|\lambda_i|=q^{1/2}$ for any embedding of $\lambda_i$ in $\C$.
\end{theorem}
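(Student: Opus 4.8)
The plan is to treat the statement in three stages: the dimensions of the cohomology groups, the Frobenius action on $H^0$ and $H^2$, and finally the eigenvalue structure on $H^1$ together with the bound $|\lambda_i| = q^{1/2}$.

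First I would recall the standard computation of the $l$-adic \'etale cohomology of a smooth projective curve over the algebraically closed field $\overline{\F}_q$. Since $X_\fq$ has \'etale cohomological dimension $2$, the groups $H^i$ vanish for $i \geq 3$; connectedness of $X$ gives $H^0(X_\fq, \overline{\Q}_l) = \overline{\Q}_l \cdot 1$; and the identification of $H^1$ with the dual of the rational Tate module of the Jacobian (equivalently, the Betti realization in a lift to characteristic zero) yields that $H^1(X_\fq, \overline{\Q}_l)$ has dimension $2g$ and $H^2(X_\fq, \overline{\Q}_l)$ has dimension $1$. These facts I would simply cite from the standard references on \'etale cohomology.

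Next I would determine the action of the geometric Frobenius $\overline{F}_X^*$ on $H^0$ and $H^2$. On $H^0$ the unit class is manifestly invariant, so $\overline{F}_X^*(1) = 1$. For $H^2$ I would invoke Poincar\'e duality, which identifies $H^2(X_\fq, \overline{\Q}_l)$ with the Tate twist $\overline{\Q}_l(-1)$; by definition the geometric Frobenius acts on $\overline{\Q}_l(-1)$ by multiplication by $q$, and hence $\overline{F}_X^*([X]) = q[X]$ for the fundamental class $[X]$ spanning $H^2$.

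Finally I would turn to $H^1$. The endomorphism $\overline{F}_X^*$ acts on the finite-dimensional space $H^1(X_\fq, \overline{\Q}_l)$, and after extending scalars if necessary one chooses a basis $\alpha_1, \ldots, \alpha_{2g}$ of eigenclasses with eigenvalues $\lambda_i$, which is the asserted decomposition of $H^1$. That the $\lambda_i$ are algebraic integers follows because Frobenius preserves the integral lattice $H^1(X_\fq, \Z_l)$ and its characteristic polynomial in fact has coefficients in $\Z$ independent of $l$, as one sees by comparing with the Lefschetz trace formula for $\#X(\F_{q^n})$. The genuinely hard part is the archimedean estimate $|\lambda_i| = q^{1/2}$ for every embedding into $\C$: this is exactly the Riemann hypothesis for curves over finite fields, due to Weil, with the higher-dimensional generalization due to Deligne. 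I would deduce it through the Jacobian, using that the eigenvalues of $\overline{F}_X^*$ on $H^1$ coincide with those of Frobenius on the rational Tate module $V_l(\Jac(X))$ and that the Rosati involution attached to the canonical principal polarization is positive definite; combining this positivity with the relation $F \cdot F' = q$, where $F'$ is the Rosati-adjoint of the Frobenius endomorphism $F$, forces $\lambda_i \overline{\lambda_i} = q$, that is $|\lambda_i| = q^{1/2}$. This positivity argument is the real obstacle, everything else being formal once the cohomology groups and Poincar\'e duality are in place; since the result is classical I would in the end record it with references to Weil and Deligne rather than reproduce the full proof.
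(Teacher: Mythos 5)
Your proposal is correct and in substance matches the paper, whose entire proof of this theorem is a citation to the standard references (Milne's \emph{\'Etale cohomology} and Freitag--Kiehl): the argument you sketch --- cohomological dimension and connectedness for the vanishing and for $H^0$, Poincar\'e duality and the Tate twist $\overline{\Q}_l(-1)$ for $\overline{F}_X^*([X])=q[X]$, integrality of the $\lambda_i$ via the lattice $H^1(X_\fq,\Z_l)$ and the Lefschetz trace formula, and the Riemann hypothesis via the Jacobian with $F\cdot F'=q$ and positivity of the Rosati involution --- is exactly the classical Weil proof contained in those references, which you too end by citing. One point to tighten: extending scalars alone does not yield a basis of eigenclasses for $H^1$; you also need that Frobenius acts semisimply on $H^1(X_\fq,\overline{\Q}_l)$, equivalently on $V_l(\Jac(X))$, which is itself a classical fact (Weil, Tate) deduced from the same Rosati-positivity machinery, since $\Q[F]\subset\End^0(\Jac(X))$ is stable under the Rosati involution and hence semisimple. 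As you invoke that machinery anyway, this is a one-line repair rather than a genuine gap.
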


\begin{proof} For a detailed proof see for example the books of \cite{M} or \cite{FK}.
\end{proof}

\subsection{The absolute geometric Frobenius morphism}
Finally, we introduce the absolute geometric Frobenius morphism, which exists again for a general algebraic stack over $\Spec(\F_q)$.

So let $\Xx$ be an algebraic stack over $\Spec(\F_q)$. We construct an endomorphism of stacks
$$F_{\Xx}: \Xx\rightarrow \Xx$$
as follows: For any object $U$ of the category $(Sch/\F_q)$ of schemes over $\F_q$ we can interpret objects $x$ of the groupoid of sections $\Xx(U)$ as morphisms of stacks $x: U\rightarrow \Xx$ and we get therefore a natural endomorphism of groupoids
$$F_{\Xx}(U): \Xx(U)\rightarrow \Xx(U),$$
which maps the morphism $x: U\rightarrow \Xx$ to the morphism $x\circ F_U$, where $F_U: U\rightarrow U$ is the absolute geometric Frobenius
endomorphism of the scheme $U$. 

Base change extension then induces an endomorphism of algebraic stacks
$$\overline{F}_{\Xx}= F_{\Xx}\times id_{\Spec(\overline{\F}_q)}: \Xx_{\overline{\F}_q} \rightarrow \Xx_{\overline{\F}_q}.$$
We call this endomorphism the {\it absolute geometric Frobenius morphism} of the algebraic stack. By naturality it again induces an endomorphism on the $l$-adic cohomology of the algebraic stack $\Xx_{\overline{\F}_q}$:
$$\overline{F}_{\Xx}^*:  H^*(\Xx_{\overline{\F}_q}; \qa)\rightarrow H^*(\Xx_{\overline{\F}_q}; \qa).$$
The absolute geometric Frobenius $\overline{F}_{\Xx}^*$ is an automorphism and in fact the inverse of the absolute arithmetic Frobenius, i.e. $\overline{F}_{\Xx}^*=(\Frob_{\Xx}^*)^{-1}$ (see \cite{Be2, Be3}).

An important example is given by the action of the absolute geometric Frobenius morphism $\overline{F}_{{\BG}_{\overline{\F}_q}}$ on the classifying stack $\BG_{\overline{\F}_q}$ of a semisimple algebraic group $G$ over the field $k=\F_q$ of rank $r$ and its induced action $\overline{F}_{{\BG}_{\overline{\F}_q}}^*$ on the $l$-adic cohomology $H^*(\BG_{\overline{\F}_q}, \overline{\Q}_l)$. We have the following description of this action, where $d_1, \ldots, d_r$ are integers and $\varepsilon_1, \ldots \varepsilon_r$ roots of unity as given by theorem \ref{CohBG}:

\begin{theorem}
The absolute geometric Frobenius $\overline{F}_{{\BG}_{\overline{\F}_q}}^*$ acts on the $l$-adic cohomology algebra of the classifying stack $\BG_{\overline{\F}_q}$
as follows:
$$
\begin{aligned}
\overline{F}_{{\BG}_{\overline{\F}_q}}^*(c_i)&= \varepsilon_i^{-1}q^{d_i}
c_i,
\end{aligned}
$$
\noindent where the $c_i$ for $i=1, 2, \ldots, r$ are the Chern classes.
\end{theorem}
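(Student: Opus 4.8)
The plan is to deduce this statement directly from the two facts already in place: that the absolute geometric Frobenius is inverse to the absolute arithmetic Frobenius on $l$-adic cohomology, $\overline{F}_{\Xx}^* = (\Frob_{\Xx}^*)^{-1}$, and the explicit computation of the arithmetic Frobenius action recorded in Theorem \ref{AFrob}, which gives $\Frob_{\BG_{\overline{\F}_q}}^*(c_i) = \varepsilon_i q^{-d_i} c_i$. So essentially no new geometry is needed; the whole content is to invert a known automorphism.

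First I would record that, by Theorem \ref{CohBG}(ii), the cohomology algebra $H^*(\BG_{\overline{\F}_q}, \overline{\Q}_l)$ is the polynomial algebra $\overline{\Q}_l[c_1, \ldots, c_r]$ on the generators $c_i$ in even degrees, and that both Frobenius maps act as graded $\overline{\Q}_l$-algebra automorphisms. Hence each is completely determined by its effect on the generators $c_i$. By Theorem \ref{AFrob} the arithmetic Frobenius acts diagonally, scaling the generator $c_i$ by the scalar $\varepsilon_i q^{-d_i}$, which is nonzero in $\overline{\Q}_l$ since $\varepsilon_i$ is a root of unity and $q$ is a prime power, hence invertible.

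Next I would invert this diagonal action. Because $\Frob_{\BG_{\overline{\F}_q}}^*$ scales each degree generator $c_i$ by $\varepsilon_i q^{-d_i}$, the inverse automorphism scales $c_i$ by the reciprocal $(\varepsilon_i q^{-d_i})^{-1} = \varepsilon_i^{-1} q^{d_i}$; this is the general fact that a diagonalizable algebra automorphism of a polynomial ring, specified on generators, has inverse obtained by taking reciprocals of the eigenvalues on those generators, and one checks consistency simply by composing the two maps on each $c_i$. Applying the identity $\overline{F}_{\BG_{\overline{\F}_q}}^* = (\Frob_{\BG_{\overline{\F}_q}}^*)^{-1}$ then yields $\overline{F}_{\BG_{\overline{\F}_q}}^*(c_i) = \varepsilon_i^{-1} q^{d_i} c_i$, which is exactly the claimed formula.

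The computation is formal once the inverse relation is available; the only point requiring genuine care — and the main, rather mild, obstacle — is the justification of $\overline{F}_{\Xx}^* = (\Frob_{\Xx}^*)^{-1}$ in this stacky setting, which I would simply invoke from the cited work of Behrend \cite{Be2, Be3} rather than reprove. Granting that, there is no real difficulty: inverting a diagonal algebra automorphism described explicitly on polynomial generators is elementary, and the root-of-unity hypothesis guarantees that $\varepsilon_i^{-1}$ is again a root of unity so the output has the same shape as the input.
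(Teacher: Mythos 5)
Your proposal is correct and follows essentially the same route as the paper: the paper's proof is precisely the observation that $\overline{F}_{{\BG}_{\overline{\F}_q}}^*=(\Frob_{{\BG}_{\overline{\F}_q}}^*)^{-1}$ (citing Behrend for this inverse relation) combined with Theorem \ref{AFrob}, so that the claimed formula follows by taking reciprocals of the eigenvalues $\varepsilon_i q^{-d_i}$ on the generators $c_i$. Your write-up merely makes explicit the elementary step of inverting a diagonal algebra automorphism of $\overline{\Q}_l[c_1,\ldots,c_r]$, which the paper leaves implicit.
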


\begin{proof} As the absolute geometric Frobenius  $\overline{F}_{{\BG}_{\overline{\F}_q}}^*=(\Frob_{{\BG}_{\overline{\F}_q}}^*)^{-1}$ is the inverse of the absolute arithmetic Frobenius $\Frob_{{\BG}_{\overline{\F}_q}}^*$ the result follows readily from theorem \ref{AFrob}. 
\end{proof}


\section{The Actions of the Geometric and Arithmetic Frobenius Morphisms On the Cohomology of the Moduli Stack}

In this final section we will determine directly and explicitly the actions of the various geometric and arithmetic Frobenius morphisms on the $l$-adic cohomology algebra of the moduli stack and describe its effect on the Chern classes as well as their interactions.

\subsection{The action of the induced geometric Frobenius on the cohomology}
For the induced geometric Frobenius morphism $\varphi$ the action on the $l$-adic cohomology is described by the following theorem: 

\begin{theorem}[Induced Geometric Frobenius]
The geometric Frobenius $\varphi^*$ acts on the $l$-adic cohomology algebra of the moduli stack $\BBBX$
as follows:
$$
\begin{aligned}
\varphi^* (a_i)&=a_i,\\
\varphi^* (b_i^{(j)})&=\lambda_j b_i^{(j)}\,\,\, (j=1,\ldots, 2g),\\
\varphi^*(f_i)&= q f_i,
\end{aligned}
$$
\noindent where $a_i, b_i^{(j)}$ and $f_i$ for $i=1, 2, \ldots, r$ are the Atiyah-Bott classes.
\end{theorem}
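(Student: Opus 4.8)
The plan is to exploit the naturality isomorphism of Proposition \ref{Prop} together with the functoriality of Chern classes under pullback. Since isomorphic principal bundles have equal characteristic classes, Proposition \ref{Prop} yields the equality
$$(\overline{F}_X\times id_{\BBBX})^*\, c_i(\Pp^{univ}) = (id_{X_\fq}\times \varphi)^*\, c_i(\Pp^{univ})$$
in $H^*(X_\fq\times\BBBX,\qa)$ for every $i=1,\dots,r$. First I would insert the Künneth decomposition of $c_i(\Pp^{univ})$ on both sides and compute how each of the two product Frobenius morphisms acts on it, reducing the whole problem to a comparison of Künneth components.

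On the left-hand side the morphism acts only on the curve factor, via $\overline{F}_X^*$, and trivially on $\BBBX$. Using the Weil--Deligne description of Theorem \ref{Weil} --- where I would take the basis $\gamma_j$ of $H^1(X_\fq,\qa)$ to be the eigenclasses $\alpha_j$, so that $\overline{F}_X^*(\gamma_j)=\lambda_j\gamma_j$, $\overline{F}_X^*(1)=1$ and $\overline{F}_X^*([X])=q[X]$ --- the left-hand side becomes
$$1\otimes a_i + \sum_{j=1}^{2g}\lambda_j\,\gamma_j\otimes b_i^{(j)} + q\,[X]\otimes f_i.$$
On the right-hand side the morphism acts trivially on the curve factor and by $\varphi^*$ on $\BBBX$, giving
$$1\otimes \varphi^*(a_i) + \sum_{j=1}^{2g}\gamma_j\otimes \varphi^*(b_i^{(j)}) + [X]\otimes \varphi^*(f_i).$$

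Since the classes $1,\gamma_1,\dots,\gamma_{2g},[X]$ form a $\qa$-basis of $H^*(X_\fq,\qa)$, the two expressions above can be compared component by component in the Künneth decomposition, and matching coefficients immediately yields $\varphi^*(a_i)=a_i$, $\varphi^*(b_i^{(j)})=\lambda_j b_i^{(j)}$ and $\varphi^*(f_i)=q f_i$, as claimed. The main point to verify carefully --- and the step I expect to be the real content rather than a formality --- is that the Künneth formula and the pullback of Chern classes behave functorially in the lisse-\'etale $l$-adic setting for these (non-finite-type) algebraic stacks, so that the action of a product morphism $\overline{F}_X\times id$ or $id\times\varphi$ really does split as the tensor product of the two factorwise actions; this is where one has to appeal to the cohomology formalism for algebraic stacks of Behrend and of Laszlo--Olsson. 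The choice of $\gamma_j$ as a Frobenius eigenbasis is the other hypothesis I would make explicit, since otherwise the off-diagonal entries of $\overline{F}_X^*$ on $H^1$ would mix the classes $b_i^{(j)}$ and obscure the diagonal form of the answer.
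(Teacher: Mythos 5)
Your proposal is correct and takes essentially the same route as the paper's own proof: the equality $(\overline{F}_X\times id_{\BBBX})^*(c_i(\Pp^{univ}))=(id_{X_\fq}\times \varphi)^*(c_i(\Pp^{univ}))$ from Proposition \ref{Prop} and functoriality of Chern classes, the K\"unneth decomposition of $c_i(\Pp^{univ})$, the Weil--Deligne eigenvalue description of $\overline{F}_X^*$ from Theorem \ref{Weil}, and a comparison of K\"unneth coefficients. The two points you flag for verification --- choosing the $\gamma_j$ to be a Frobenius eigenbasis and the factorwise behaviour of the K\"unneth isomorphism in the lisse-\'etale $l$-adic formalism --- are used tacitly in the paper, so making them explicit is a refinement rather than a different argument.
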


\begin{proof}
Using functoriality of Chern classes and proposition \ref{Prop} we get the following equality
$$(\overline{F}_X\times
id_{\BBBX})^*(c_i(\Pp^{univ}))=
(id_{X_\fq}\times \varphi)^*(c_i(\Pp^{univ}))$$
Using K\"unneth decomposition for the Chern classes $c_i(\Pp^{univ})$ of the universal principal $G$-bundle $\Pp^{univ}$ we have:
$$c_i(\Pp^{univ})= 1\otimes a_i + \sum_{j=1}^{2g} \gamma_j\otimes
b_i^{(j)} + [X]\otimes f_{i}.$$ 
where the classes  
$a_i\in H^{2d_i}(\BBBX; \qa)$, 
$b_i^{(j)}\in H^{2d_i-1}(\BBBX; \qa)$ and 
$f_{i} \in H^{2(d_i-1)}(\BBBX; \qa)$ 
are the Atiyah-Bott classes. 

Evaluating the two expressions from the equality above and using theorem \ref{Weil} we get the following two equations
$$
(\overline{F}_X\times id_{\BBBX})^*(c_i(\Pp^{univ})) =
1\otimes a_i + \sum_{j=1}^{2g}\lambda_j \gamma_j\otimes
b_i^{(j)} + q[X]\otimes f_{i}
$$
$$
(id_{X_\fq}\times \varphi)^*(c_i(\Pp^{univ}))=1\otimes \varphi^*(a_i) + \sum_{j=1}^{2g} \gamma_j\otimes
\varphi^*(b_i^{(j)}) + [X]\otimes \varphi^*(f_{i}).
$$
Comparing now coefficients of the right hand sides of these two equations finally gives the desired description of the action of the Frobenius 
morphism $\varphi^*$ on the $l$-adic cohomology algebra of the moduli stack $\BBBX$.
\end{proof}

\subsection{The action of the absolute geometric Frobenius on the cohomology}

For the absolute geometric Frobenius morphism  the action on the $l$-adic cohomology of the moduli stack is described by the following theorem:

\begin{theorem}[Absolute Geometric Frobenius]
The absolute geometric Frobenius $\overline{F}_{\BB}^*$ acts on the $l$-adic cohomology algebra of the moduli stack $\BBBX$
as follows:
$$
\begin{aligned}
\overline{F}_{\BB}^*(a_i)&= \varepsilon_i^{-1}q^{d_i} a_i,\\
\overline{F}_{\BB}^*(b_i^{(j)})&=
\lambda_j^{-1} \varepsilon_i^{-1}q^{d_i} b_i^{(j)}\,\,\, (j=1,\ldots, 2g),\\
\overline{F}_{\BB}^*(f_i)&= \varepsilon_i^{-1}q^{d_i-1} f_i,
\end{aligned}
$$
\noindent where $a_i, b_i^{(j)}$ and $f_i$ for $i=1, 2, \ldots, r$ are the Atiyah-Bott classes.
\end{theorem}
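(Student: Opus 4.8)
The plan is to run essentially the same argument as for the induced geometric Frobenius in the previous theorem, but replacing the naturality of Proposition~\ref{Prop} by the naturality of the \emph{absolute} geometric Frobenius: by its very construction $\overline{F}$ is a natural endomorphism of the identity functor on algebraic stacks, so it is compatible with every morphism of stacks. I will combine this naturality with the already computed action on the classifying stack, the K\"unneth decomposition of $c_i(\Pp^{univ})$, and the Weil--Deligne eigenvalues on the curve.

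First I would record the equivariance of the classifying map $u\colon X_\fq\times\BBBX\to\BG_\fq$ defining $\Pp^{univ}$. Naturality of the absolute geometric Frobenius gives $u\circ\overline{F}_{X_\fq\times\BBBX}=\overline{F}_{\BG}\circ u$, hence on cohomology $\overline{F}_{X_\fq\times\BBBX}^*\circ u^*=u^*\circ\overline{F}_{\BG}^*$. Applying this to the Chern generators $c_i$, using $c_i(\Pp^{univ})=u^*(c_i)$ together with the action $\overline{F}_{\BG}^*(c_i)=\varepsilon_i^{-1}q^{d_i}c_i$ established above for the classifying stack, I obtain
$$\overline{F}_{X_\fq\times\BBBX}^*\big(c_i(\Pp^{univ})\big)=\varepsilon_i^{-1}q^{d_i}\,c_i(\Pp^{univ}).$$

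Next I would show that the pullback $\overline{F}_{X_\fq\times\BBBX}^*$ acts diagonally on the K\"unneth tensor factors, i.e.\ as $\overline{F}_X^*\otimes\overline{F}_{\BB}^*$. This follows by composing the absolute geometric Frobenius of the product with the two projections and invoking its naturality again, so that the universal property of the product identifies $\overline{F}_{X_\fq\times\BBBX}$ with $\overline{F}_X\times\overline{F}_{\BBBX}$. I would then expand both sides of the displayed identity using $c_i(\Pp^{univ})=1\otimes a_i+\sum_{j=1}^{2g}\gamma_j\otimes b_i^{(j)}+[X]\otimes f_i$, insert the eigenvalues of Theorem~\ref{Weil} on the curve factor, namely $\overline{F}_X^*(1)=1$, $\overline{F}_X^*(\gamma_j)=\lambda_j\gamma_j$ and $\overline{F}_X^*([X])=q[X]$, and compare coefficients of $1$, of each $\gamma_j$, and of $[X]$. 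This yields precisely $\overline{F}_{\BB}^*(a_i)=\varepsilon_i^{-1}q^{d_i}a_i$, then $\lambda_j\overline{F}_{\BB}^*(b_i^{(j)})=\varepsilon_i^{-1}q^{d_i}b_i^{(j)}$ so that $\overline{F}_{\BB}^*(b_i^{(j)})=\lambda_j^{-1}\varepsilon_i^{-1}q^{d_i}b_i^{(j)}$, and finally $q\,\overline{F}_{\BB}^*(f_i)=\varepsilon_i^{-1}q^{d_i}f_i$, giving $\overline{F}_{\BB}^*(f_i)=\varepsilon_i^{-1}q^{d_i-1}f_i$.

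The main obstacle I expect is the product factorization in the second step: one has to justify carefully that the absolute geometric Frobenius, defined by precomposing a test point $x\colon U\to\Xx$ with the Frobenius $F_U$ of the source scheme, is genuinely compatible with the projections of the product and therefore does not mix the two K\"unneth factors. Granting this, everything else is formal, being a combination of functoriality of Chern classes, the action on $\BG_\fq$, and Theorem~\ref{Weil}. As a consistency check, the resulting eigenvalues should be the inverses of those for the absolute arithmetic Frobenius, in accordance with the relation $\overline{F}_{\BB}^*=(\Frob_{\BB}^*)^{-1}$.
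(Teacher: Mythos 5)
Your proposal is correct and follows essentially the same route as the paper's own proof: pull back along the classifying morphism $u\colon X_\fq\times\BBBX\to\BG_\fq$, use naturality of the absolute geometric Frobenius (the paper phrases this via a $2$-cartesian diagram giving $\overline{F}_{\BG}\circ u\cong u\circ\overline{F}_{X\times\BBX}$) together with its known action on $H^*(\BG_\fq,\qa)$, factor $\overline{F}_{X\times\BBX}^*$ through the two Künneth tensor factors, and compare coefficients against the Weil--Deligne eigenvalues on the curve. The product factorization you flag as the main obstacle is exactly the step the paper also uses (writing $\overline{F}_{X\times\BBX}^*=(\overline{F}_X\times id)^*(id\times\overline{F}_{\BBX})^*$), and your justification by precomposition with $F_U$ on test objects is the standard and correct one.
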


\begin{proof}
Let  $\Ee_G^{univ}$ be the universal principal $G$-bundle over the classifying stack $\BG_{\fq}$ of the algebraic group $G$. 

The universal principal $G$-bundle $\Pp^{univ}$ over the algebraic stack $X_\fq\times \BBBX$
is then given via representability by a classifying morphism of algebraic stacks
$$u: X_\fq \times \BBBX\rightarrow \BG_{\fq}$$ 
with
$u^*(\Ee_G^{univ})\cong \Pp^{univ}$.
On the one hand we have a pullback diagram of $1$-morphisms of stacks related to the various actions of the geometric Frobenius morphisms, which is a $2$-cartesian diagram
$$\xymatrix{X_\fq\times \BBBX \ar[rr]^u \ar[d]_{\overline{F}_{X\times \BBX}}&&
\BG_\fq \ar[d]^{\overline{F}_{\BG}}\\
X\times \BBBX\ar[rr]^u   && \BG_\fq}
$$
inducing the following commutativity law via naturality for the respective geometric Frobenius morphisms: 
$$\overline{F}_{\BG}\circ u\cong u\circ \overline{F}_{X \times \BBX}.$$
Now evaluating the geometric Frobenius morphism $\overline{F}_{X\times \BBX}$ on the pullback of the universal principal $G$-bundle and using naturality of Chern classes gives then:
$$
\begin{aligned}
(\overline{F}_{X\times \BBX})^*(c_i(u^*(\Ee_G^{univ}))) &=
(\overline{F}_{X\times \BBX})^*(u^*(c_i(\Ee_G^{univ})))\\
&= u^*(\overline{F}_{\BG}^*(c_i(\Ee_G^{univ})))\\
&= \varepsilon_i^{-1}q^{d_i} u^*(c_i(\Ee_G^{univ}))\\
&= \varepsilon_i^{-1}q^{d_i} c_i(u^*(\Ee_G^{univ}))\\
&= \varepsilon_i^{-1}q^{d_i} c_i(\Pp^{univ})\\
&= \varepsilon_i^{-1}q^{d_i} (1\otimes a_i + \sum_{j=1}^{2g} \gamma_j\otimes
b_i^{(j)} + [X]\otimes f_{i}).
\end{aligned}
$$
On the other hand we also have the following relation between the geometric Frobenius morphisms evaluated on the Chern classes:
$$
\begin{aligned}
\overline{F}_{X\times
\BBX}^*(c_i(u^*(\Ee_G^{univ})))\\
&\hspace*{-1.5cm}=\overline{F}_{X\times
\BBX}^*(c_i({\Pp}^{univ}))\\
&\hspace*{-1.5cm}=(\overline{F}_X\times
id_{\BBBX})^*(id_{X_\fq}\times
\overline{F}_{\BBX})^*(c_i(\Pp^{univ})).
\end{aligned}
$$
Therefore we get now the following expression:
$$
\begin{aligned}
(\overline{F}_X\times
id_{\BBBX})^*(id_{X_\fq}\times
\overline{F}_{\BBX})^*(c_i(\Pp^{univ}))&\\
&\hspace*{-7cm}=(id_{X_\fq}\times
\overline{F}_{\BBX})^*(\overline{F}_X\times
id_{\BBBX})^*(c_i(\Pp^{univ}))\\
&\hspace*{-7cm}=(id_{X_\fq}\times
\overline{F}_{\BBX})^*(\overline{F}_X\times
id_{\BBBX})^*(1\otimes a_i + &\\
& &\hspace*{-5cm}+\sum_{j=1}^{2g} \gamma_j\otimes
b_i^{(j)} + [X]\otimes f_{i})\\
&\hspace*{-7cm}=(id_{X_\fq}\times
\overline{F}_{\BBX})^*(1\otimes a_i +\sum_{j=1}^{2g}\lambda_j \gamma_j\otimes
b_i^{(j)} + q[X]\otimes f_{i}).
\end{aligned}
$$
Comparing coefficients gives again the description of the action of the geometric Frobenius morphism $\overline{F}_{\BB}^*$ as stated in the theorem.
\end{proof}

\subsection{The action of the induced arithmetic Frobenius on the cohomology}

We calculate now the action of the induced arithmetic Frobenius morphism $\psi^*$ on the $l$-adic cohomology of the moduli stack. We have the following theorem:

\begin{theorem}[Induced Arithmetic Frobenius]
The induced arithmetic Frobenius $\psi^*$ acts on the $l$-adic cohomology algebra of the moduli stack $\BBBX$
as follows:
$$
\begin{aligned}
\psi^*(a_i)&= a_i,\\
\psi^*(b_i^{(j)})&= \lambda_j^{-1} b_i^{(j)}\,\,\,
(j=1,\ldots, 2g),\\
\psi^*(f_i)&= q^{-1} f_i,
\end{aligned}
$$
\noindent where the $a_i^{(j)}, b_i$ and $c_i$ for $i=1, 2, \ldots, r$ are the Atiyah-Bott classes.
\end{theorem}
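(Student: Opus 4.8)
The plan is to mirror the proof of the Induced Geometric Frobenius theorem, replacing the geometric Frobenius $\overline{F}_X$ of the curve by the arithmetic Frobenius $\Frob_X$ throughout. First I would invoke the naturality isomorphism of Proposition \ref{Prop1},
$$(\Frob_X\times id_{\BBBX})^*(\Pp^{univ})\cong (id_{X_\fq}\times \psi)^*(\Pp^{univ}),$$
apply functoriality of Chern classes to obtain the equality $(\Frob_X\times id_{\BBBX})^*(c_i(\Pp^{univ}))=(id_{X_\fq}\times \psi)^*(c_i(\Pp^{univ}))$, and thereby reduce the determination of $\psi^*$ to comparing the two pullbacks of $c_i(\Pp^{univ})$ under the K\"unneth decomposition
$$c_i(\Pp^{univ})= 1\otimes a_i + \sum_{j=1}^{2g} \gamma_j\otimes b_i^{(j)} + [X]\otimes f_{i}.$$

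The one genuinely new ingredient is the action of the arithmetic Frobenius $\Frob_X^*$ on the $l$-adic cohomology of the curve. Here I would use the relation, recalled earlier in the text, that the absolute geometric Frobenius is the inverse of the absolute arithmetic Frobenius, $\overline{F}_X^*=(\Frob_X^*)^{-1}$, and simply invert the eigenvalues furnished by Theorem \ref{Weil} on the eigenbasis $\{1,\gamma_j,[X]\}$. This yields
$$\Frob_X^*(1)=1,\qquad \Frob_X^*(\gamma_j)=\lambda_j^{-1}\gamma_j,\qquad \Frob_X^*([X])=q^{-1}[X].$$

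With these formulas the two sides of the naturality identity become
$$(\Frob_X\times id_{\BBBX})^*(c_i(\Pp^{univ}))= 1\otimes a_i + \sum_{j=1}^{2g} \lambda_j^{-1}\gamma_j\otimes b_i^{(j)} + q^{-1}[X]\otimes f_{i}$$
and
$$(id_{X_\fq}\times \psi)^*(c_i(\Pp^{univ}))= 1\otimes \psi^*(a_i) + \sum_{j=1}^{2g} \gamma_j\otimes \psi^*(b_i^{(j)}) + [X]\otimes \psi^*(f_{i}),$$
so that comparing the coefficients of $1$, of each $\gamma_j$, and of $[X]$ gives the three stated eigenvalue equations $\psi^*(a_i)=a_i$, $\psi^*(b_i^{(j)})=\lambda_j^{-1}b_i^{(j)}$ and $\psi^*(f_i)=q^{-1}f_i$.

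I expect the only delicate point to be the justification of the inversion of eigenvalues for $\Frob_X^*$: one must confirm that the inverse relation $\overline{F}_X^*=(\Frob_X^*)^{-1}$, stated for a general algebraic stack, specializes correctly to the scheme $X_\fq$ and is compatible with the eigenbasis of Theorem \ref{Weil}, being careful not to conflate the arithmetic and geometric Frobenius (whose eigenvalues are reciprocal). Once this is settled, the remainder is a formal coefficient comparison identical in structure to the induced geometric case.
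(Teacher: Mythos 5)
Your proposal is correct and follows essentially the same route as the paper's own proof: invoke Proposition \ref{Prop1} together with functoriality of Chern classes, use $\Frob_X^*=(\overline{F}_X^*)^{-1}$ to invert the eigenvalues of Theorem \ref{Weil} on the basis $\{1,\gamma_j,[X]\}$, and compare K\"unneth coefficients. The ``delicate point'' you flag is handled in the paper exactly as you propose, by citing the general inverse relation between the arithmetic and geometric Frobenius actions on $l$-adic cohomology.
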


\begin{proof} Functoriality of Chern classes and proposition \ref{Prop1} give the following: 
$$(\Frob_X\times id_{\BBBX})^*(c_i(\Pp^{univ}))=(id_{X_\fq}\times \psi)^*
(c_i(\Pp^{univ}))$$

Using K\"unneth decomposition for the Chern classes $c_i(\Pp^{univ})$ of the universal principal $G$- bundle $\Pp^{univ}$ we have as before
$$c_i(\Pp^{univ})= 1\otimes a_i + \sum_{j=1}^{2g} \gamma_j\otimes
b_i^{(j)} + [X]\otimes f_{i},$$ 
where the classes  
$a_i\in H^{2d_i}(\BBBX; \qa)$, 
$b_i^{(j)}\in H^{2d_i-1}(\BBBX; \qa)$ and 
$f_{i} \in H^{2(d_i-1)}(\BBBX; \qa)$ 
are the Atiyah-Bott classes. 

Evaluating again the two expressions from the above equality of Chern classes and using also the fact that the arithmetic Frobenius on the algebraic curve $X$ acts as the inverse of the geometric Frobenius, i.e. $\Frob_X^*=(\overline{F}_X^*)^{-1}$ we get from theorem \ref{Weil} the following equality of Chern classes:
$$(\Frob_X\times id_{\BBBX})^*(c_i(\Pp^{univ}))=
1\otimes a_i + \sum_{j=1}^{2g}\lambda_j^{-1} \gamma_j\otimes
b_i^{(j)} + q^{-1}[X]\otimes f_{i}.
$$
But otherwise we also have:
$$
(id_{X_\fq}\times \psi)^*(c_i(\Pp^{univ}))=1\otimes \psi^*(a_i) + \sum_{j=1}^{2g} \gamma_j\otimes
\psi^*(b_i^{(j)}) + [X]\otimes \psi^*(f_{i}).
$$
Comparing coefficients of the right hand sides of these two equations finally gives the desired description of the action of the induced arithmetic Frobenius  morphism $\psi^*$.
\end{proof}

\subsection{The action of the absolute arithmetic Frobenius on the cohomology} 

Finally, we analyze the effect of the arithmetic Frobenius morphism  $\Frob_{\BBBX}^*$ on the $l$-adic cohomology algebra of the moduli stack.
The action is completely described by the following theorem:
\begin{theorem}[Absolute Arithmetic Frobenius]
The absolute arithmetic Frobenius $\Frob_{\BBBX}^*$ acts on the $l$-adic cohomology algebra of the moduli stack $\BBBX$
as follows:
$$
\begin{aligned}
\Frob_{\BBBX}^*(a_i)&= \varepsilon_i q^{-d_i} a_i,\\
\Frob_{\BBBX}^*(b_i^{(j)})&= \lambda_j \varepsilon_i q^{-d_i} b_i^{(j)}\,\,\,
(j=1,\ldots, 2g),\\
\Frob_{\BBBX}^*(f_i)&= \varepsilon_i q^{-d_i+1}
f_i,
\end{aligned}
$$
\noindent where the $a_i^{(j)}, b_i$ and $c_i$ for $i=1, 2, \ldots, r$ are the Atiyah-Bott classes.
\end{theorem}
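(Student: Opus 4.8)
The plan is to run the same argument used for the absolute geometric Frobenius, now with the arithmetic Frobenius in place of the geometric one. The two ingredients I would invoke are Theorem \ref{AFrob}, which records $\Frob_{\BG}^*(c_i) = \varepsilon_i q^{-d_i} c_i$ on the classifying stack, and the fact (Theorem \ref{Weil}) that on the curve the arithmetic Frobenius inverts the geometric one, $\Frob_X^* = (\overline{F}_X^*)^{-1}$, so that $\Frob_X^*(1)=1$, $\Frob_X^*(\gamma_j)=\lambda_j^{-1}\gamma_j$ and $\Frob_X^*([X])=q^{-1}[X]$. I expect no genuinely new difficulty, since everything is parallel to the absolute geometric case already treated; the whole content is in matching two Künneth expansions.

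First I would fix the classifying morphism $u : X_\fq \times \BBBX \to \BG_\fq$ with $u^*(\Ee_G^{univ}) \cong \Pp^{univ}$, together with the $2$-cartesian square for the arithmetic Frobenius morphisms giving the commutativity $\Frob_{\BG}\circ u \cong u\circ \Frob_{X\times\BBBX}$. Evaluating $\Frob_{X\times\BBBX}^*$ on $c_i(\Pp^{univ}) = u^*(c_i(\Ee_G^{univ}))$, pushing the Frobenius through $u$ by naturality of Chern classes and applying Theorem \ref{AFrob} yields on one side
$$\Frob_{X\times\BBBX}^*\big(c_i(\Pp^{univ})\big) = \varepsilon_i q^{-d_i}\Big(1\otimes a_i + \sum_{j=1}^{2g}\gamma_j\otimes b_i^{(j)} + [X]\otimes f_i\Big).$$
On the other side I would factor $\Frob_{X\times\BBBX}^* = (\Frob_X\times id_{\BBBX})^*\circ(id_{X_\fq}\times \Frob_{\BBBX})^*$, apply the moduli factor to leave the unknowns $\Frob_{\BBBX}^*(a_i)$, $\Frob_{\BBBX}^*(b_i^{(j)})$, $\Frob_{\BBBX}^*(f_i)$, and then apply the curve factor, which multiplies the $\gamma_j$-term by $\lambda_j^{-1}$ and the $[X]$-term by $q^{-1}$. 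Comparing the coefficients of $1\otimes(-)$, $\gamma_j\otimes(-)$ and $[X]\otimes(-)$ isolates the three eigenvalues and produces $\varepsilon_i q^{-d_i}$, $\lambda_j\varepsilon_i q^{-d_i}$ and $\varepsilon_i q^{-d_i+1}$ respectively.

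The only point prone to error is the exponent on $\lambda_j$: the curve factor contributes $\lambda_j^{-1}$ to the coefficient of $\gamma_j$, so solving for $\Frob_{\BBBX}^*(b_i^{(j)})$ flips this to a positive $\lambda_j$, in contrast with the induced arithmetic case where $\psi^*(b_i^{(j)})=\lambda_j^{-1} b_i^{(j)}$. As a consistency check, since $\overline{F}_{\BBBX}^* = (\Frob_{\BBBX}^*)^{-1}$, the eigenvalues here must be the reciprocals of those in the absolute geometric Frobenius theorem, and indeed $(\varepsilon_i^{-1}q^{d_i})^{-1} = \varepsilon_i q^{-d_i}$, $(\lambda_j^{-1}\varepsilon_i^{-1}q^{d_i})^{-1} = \lambda_j\varepsilon_i q^{-d_i}$ and $(\varepsilon_i^{-1}q^{d_i-1})^{-1} = \varepsilon_i q^{-d_i+1}$, recovering the claimed formulas and simultaneously furnishing a one-line alternative derivation.
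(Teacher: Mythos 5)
Your proposal is correct and follows essentially the same route as the paper, which in fact contains both of your arguments: its primary proof is the one-line inversion $\Frob_{\BBBX}^*=(\overline{F}_{\BBX}^*)^{-1}$ of the absolute geometric case, and its alternative proof is exactly your direct comparison via the classifying morphism $u$, the $2$-cartesian square giving $\Frob_{\BG}\circ u\cong u\circ \Frob_{X\times\BBX}$, Theorem \ref{AFrob}, and matching of K\"unneth coefficients. The pitfall you single out is real: the paper's printed direct computation applies $(\Frob_X\times id_{\BBBX})^*$ with eigenvalues $\lambda_j$ on $\gamma_j$ and $q$ on $[X]$, which is inconsistent with its own induced-arithmetic proof (where $\Frob_X^*=(\overline{F}_X^*)^{-1}$ gives $\lambda_j^{-1}$ and $q^{-1}$) and, taken literally, would produce $\lambda_j^{-1}\varepsilon_i q^{-d_i}$ and $\varepsilon_i q^{-d_i-1}$, contradicting the stated theorem — your version with $\lambda_j^{-1}$ and $q^{-1}$ is the correct one, and your consistency check against the absolute geometric eigenvalues confirms the formulas as stated.
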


\begin{proof}
The relations follow from the previous theorem by observing that the arithmetic Frobenius morphism $\Frob_{\BBBX}^*$ is the
inverse of the absolute geometric Frobenius morphism $\overline{F}_{\BBX}^*$ (see \cite{Be}, \cite{Be2} and \cite{Su}). 

Alternatively, we can also argue directly in the following way:
Let  $\Ee_G^{univ}$ be again the universal principal $G$-bundle over the classifying stack $\BG_{\fq}$ of the algebraic group $G$. 

The universal principal $G$-bundle $\Pp^{univ}$ over the algebraic stack $X_\fq\times \BBBX$
is given via representability by a classifying morphism of algebraic stacks
$$u: X_\fq \times \BBBX\rightarrow \BG_{\fq}$$ 
with
$u^*(\Ee_G^{univ})\cong \Pp^{univ}$.

On the one hand we have the $2$-cartesian diagram
$$\xymatrix{X_\fq\times \BBBX \ar[rr]^u \ar[d]_{\Frob_{X\times \BBX}}&&
\BG_\fq \ar[d]^{\Frob_{\BG}}\\
X\times \BBBX\ar[rr]^u && \BG_\fq}
$$
inducing the following commutativity law via naturality for the respective arithmetic Frobenius morphisms: 
$$\Frob_{\BG}\circ u\cong u\circ \Frob_{X \times \BBX}.$$
Now evaluating the arithmetic Frobenius morphism $\overline{F}_{X\times \BBX}$ on the pullback of the universal principal $G$-bundle and using naturality of Chern classes gives:
$$
\begin{aligned}
(\Frob_{X\times \BBX})^*(c_i(u^*(\Ee_G^{univ}))) &=
(\Frob_{X\times \BBX})^*(u^*(c_i(\Ee_G^{univ})))\\
&= u^*(\Frob_{\BG}^*(c_i(\Ee_G^{univ})))\\
&= \varepsilon_i q^{-d_i} u^*(c_i(\Ee_G^{univ}))\\
&= \varepsilon_i q^{-d_i} c_i(u^*(\Ee_G^{univ}))\\
&= \varepsilon_i q^{-d_i} c_i(\Pp^{univ})\\
&= \varepsilon_i q^{-d_i} (1\otimes a_i + \sum_{j=1}^{2g} \gamma_j\otimes
b_i^{(j)} + [X]\otimes f_{i}).
\end{aligned}
$$
But on the other hand we also have the following functorial relation between the absolute arithmetic Frobenius morphisms evaluated on the Chern classes
$$
\begin{aligned}
\Frob_{X\times
\BBX}^*(c_i(u^*(\Ee_G^{univ})))\\
&\hspace*{-2.6cm}=\Frob_{X\times
\BBX}^*(c_i({\Pp}^{univ}))\\
&\hspace*{-2.6cm}=(\Frob_X\times
id_{\BBBX})^*(id_{X_\fq}\times
\Frob_{\BBX})^*(c_i(\Pp^{univ})).
\end{aligned}
$$
Therefore we get now the following expression:
$$
\begin{aligned}
(\Frob_X\times
id_{\BBBX})^*(id_{X_\fq}\times
\Frob_{\BBX})^*(c_i(\Pp^{univ}))&\\
&\hspace*{-8cm}=(id_{X_\fq}\times
\Frob_{\BBX})^*(\Frob_X\times
id_{\BBBX})^*(c_i(\Pp^{univ}))\\
&\hspace*{-8cm}=(id_{X_\fq}\times
\Frob_{\BBX})^*(\Frob_X\times
id_{\BBBX})^*(1\otimes a_i + &\\
& \hspace*{-1.6cm}+\sum_{j=1}^{2g} \gamma_j\otimes
b_i^{(j)} + [X]\otimes f_{i})\\
&\hspace*{-8cm}=(id_{X_\fq}\times
\Frob_{\BBX})^*(1\otimes a_i +\sum_{j=1}^{2g}\lambda_j \gamma_j\otimes
b_i^{(j)} + q[X]\otimes f_{i})
\end{aligned}
$$
Finally, comparing coefficients gives the description of the action of the absolute Frobenius morphism $\Frob_{\BBBX}^*$ as desired.\end{proof}

To illustrate the actions of the various Frobenius morphisms let us look at the very special case of the moduli stack of line bundles on algebraic curves, i.e. the case of the algebraic group $G=\G_m$.

\begin{example}[Moduli stack of line bundles over an algebraic curve]
The moduli stack $\Bb un_{\G_m, X, \F_q}$ of principal $\G_m$-bundles, i.e. line bundles on $X$ decomposes as the disjoint union of the moduli stacks classifying line bundles of degree $d$ on $X$, i.e. we get
$$\Bb un_{\G_m, X, \F_q}\cong\coprod_{d\in \Z}\Bb un^{1, d}_ {\G_m, X, \F_q}.$$

A coarse moduli space for the algebraic stack $\Bb un_X^{1, d}$ is given by the Picard scheme 
$\Pic^d_X$ of $X$. This assembles into a $\G_m$-gerbe 
$$\Bb un_X^{1, d}\rightarrow \Pic^{d}_X.$$
Furthermore, there exists a Poincar\'e universal family on the product scheme $X\times \Pic_X^d$ which gives a section of the $\G_m$-gerbe and therefore a splitting isomorphism (see \cite{Ra}, \cite[Thm. G]{DrNa} and \cite{He}) of algebraic stacks
$$\Bb un_X^{1, d}\cong \Pic^{d}_X \times \Bb\G_m$$
We can now use the K\"unneth theorem to determine the cohomology of the moduli stack $\Bun_X^{1, d}$ and have
$$H^*(\Bb un_X^{1,d}, \qa)\cong H^*(\Pic^{d}_X, \qa)\otimes H^*(\Bb \G_m, \qa).$$
\noindent The cohomology of the Picard scheme $\Pic^d_X$ is isomorphic to the cohomology of the Jacobian $\Jac(X)$ of the algebraic curve $X$, but the Jacobian is an abelian variety and therefore
$$H^*(\Jac(X), \qa)\cong \Lambda_{\qa} (H^1(X, \qa)\cong \Lambda_{\qa}(b_1^{(1)}, \ldots, b_1^{(2g)}).$$
So we get for the cohomology of the moduli stack
$$H^*(\Bun_X^{1,d}, \qa)\cong \Lambda_{\qa}(b_1^{(1)}, \ldots, b_1^{(2g)})\otimes \qa[a_1].$$
From this we can calculate the actions of the absolute and induced Frobenius morphism as before. In the case $G=\G_m$ we also have that the degree $d_1=1$. For the induced arithmetic Frobenius $\psi$ we get:
$$
\begin{aligned}
\psi^*(a_1)&= a_1,\\
\psi^*(b_1^{(j)})&= \lambda_j^{-1} b_1^{(j)}\,\,\,
(j=1,\ldots, 2g).
\end{aligned}
$$
The action of the absolute arithmetic Frobenius $\Frob_{\Bb un_X^{1,d}}$ is given as:
$$
\begin{aligned}
\Frob_{\Bb un_X^{1,d}}^*(a_1)&= \varepsilon_1 q^{-1} a_i,\\
\Frob_{\Bb un_X^{1,d}}^*(b_1^{(j)})&= \lambda_j \varepsilon_1 q^{-1} b_1^{(j)}\,\,\,
(j=1,\ldots, 2g).
\end{aligned}
$$
\end{example}
\noindent The actions of the induced geometric respectively absolute geometric Frobenius morphism can be derived from the above as they are the inverses of the induced arithmetic respectively absolute arithmetic Frobenius morphism.

\subsection{Actions of Frobenius morphisms, compositions and formal traces} 

In this section we will be studying compositions of the various Frobenius morphisms on the $l$-adic cohomology algebra of the moduli stack of principal bundles. Recall that there is an isomorphism of graded $\qa$-algebras
$$
\begin{aligned}
H^*(\BBBX, \qa)& \cong
\qa[a_1, \ldots, a_r]\otimes\qa[f_1, \ldots, f_r]\\
&\otimes\Lambda^*_{\qa}(b_1^{(1)}, \ldots, b_1^{(2g)}, \ldots,
b_r^{(1)}, \ldots, b_r^{(2g)}),
\end{aligned}
$$
\noindent where the generators $a_i, b_i^{(j)}$ and $f_i$ for $i=1, 2, \ldots, r;\,\,  j=1, \ldots 2g$ are the Atiyah-Bott characteristic classes with $a_i\in H^{2d_i}(\BBBX, \qa), b_i^{(j)}\in H^{2d_i-1}(\BBBX, \qa), 
f_i\in H^{2(d_i-1)}(\BBBX, \qa)$. 

From theorem 3.3 we obtain directly for the $n$-fold iterated action $(\psi^*)^n\,\, (n\in\Bbb N)$ of the induced arithmetic Frobenius $\psi^*$ on the $l$-adic cohomology algebra
$$(\psi^*)^n: H^*(\BBBX; \qa)\rightarrow H^*(\BBBX; \qa)$$
the following explicit expressions
$$
\begin{aligned}
(\psi^*)^n(a_i)&=a_i,\\
(\psi^*)^n(b_i^{(j)})&=\lambda_j^{-n}\cdot b_i^{(j)} \,\,\, (j=1,...,2g),\\
(\psi^*)^n(f_i))&=q^{-n}\cdot f_i,
\end{aligned}
$$
\noindent We recall that every scalar $\lambda_j\in\qa$ which appears in the actions of the different Frobenius morphisms are algebraic integers with 
$|\lambda_j|=q^{\frac{1}{2}}$ for any embedding of $\lambda_j$ in $\Bbb C$.

\noindent Denote by $\text{tr}$ the trace of a linear map of algebras, then the formal trace  is given by
$$\text{tr}((\psi^*)^n; H^*(\BBBX; \qa))=\sum\limits_{i\geq 0}(-1)^i \text{tr}((\psi^*)^n; H^i(\BBBX; \qa))$$
\noindent The action of $(\psi^*)^n$ on $\qa[a_1, \ldots, a_r]$ and $\qa[f_1, \ldots, f_r]$ respectively is given as an $(r\times r)$-identity matrix $I_r$, and a matrix $q^{-n}\cdot I_r$ respectively. Furthermore, the exterior algebra has a decomposition as
$$\Lambda_{\qa}({b_i^{(j)}}: i=1,...,r, j=1,...,2g)=\bigoplus\limits_{m\geq 0}\Lambda^m_{\qa}({b_i^{(j)}}: i=1,...,r, j=1,...,2g)$$

\noindent  For $i=1,...,r, j=1,...,2g$ let $v_{2g(i-1)+j}:=b_i^{(j)}$, then for $1\leq i_i<i_2<\cdots<i_m\leq 2rg$ the elements $v_{i_1}\wedge\cdots\wedge v_{i_m}$ generate the exterior algebra in degree $m$, $\Lambda^m_{\qa}({b_i^{(j)}})$, where for every subscript $i_k$ there exists a subscript $l_{i_k}\in\{1,...,r\}$ and a supraindex $j_{i_k}\in\{1,...,2g\}$ such that $v_{i_k}=b^{j_{i_k}}_{l_{i_k}}$. 

\noindent The action 

$$(\psi^*)^n:\Lambda^m_{\qa}({b_i^{(j)}})\longrightarrow\Lambda^m_{\qa}({b_i^{(j)}})$$ 

\noindent on the elements $v_{i_1}\wedge\cdots\wedge v_{i_m}\in\Lambda^m_{\qa}({b_i^{(j)}})$ is given explicitly as

$$
\begin{aligned}
(\psi^*)^n(v_{i_1}\wedge\dots\wedge v_{i_m}) & = (\psi^*)^n((v_{i_1}))\wedge\dots\wedge(\psi^*)^n((v_{i_m}))\\
& = (\psi^*)^n(v_{i_1})\wedge\dots\wedge(\psi^*)^n(v_{i_m}))\\
&= (\psi^*)^n(b^{(j_{i_1})}_{l_{i_1}})\wedge\dots\wedge(\psi^*)^n(b^{(j_{i_m})}_{l_{i_m}})\\
&=\lambda_{j_{i_1}}^{-n}\cdots\lambda_{j_{i_m}}^{-n} b^{(j_{i_1})}_{l_{i_1}}\wedge\dots\wedge b^{(j_{i_m})}_{l_{i_m}}\\
&=\lambda_{j_{i_1}}^{-n}\cdots\lambda_{j_{i_m}}^{-n} v_{i_1}\wedge\dots\wedge v_{i_m}.
\end{aligned}
$$

\noindent Therefore the action of the iteration $(\psi^*)^n$ on the exterior algebra on elements in degree $m$ is
$$(\psi^*)^n(v_{i_1}\wedge\dots\wedge v_{i_m})=(\prod\limits_{t=1}^m\lambda_{j_{i_t}}^{-n})v_{i_1}\wedge\dots\wedge v_{i_m}
$$
\noindent The action $(\psi^*)^n$ on the exterior algebra  $\Lambda^m_{\qa}({b_i^{(j)}})$ is hence given by a diagonal matrix of size $\binom{2rg}{m}\times\binom{2rg}{m} $. From this action we get that for every subscript $1\leq i_1<i_2<\cdots<i_m\leq 2rg$  and every element $v_{i_1}\wedge\dots\wedge v_{i_m}\in \Lambda^m_{\qa}({b_i^{(j)}})$ we obtain a scalar $\lambda^{-n}_{j_{i_1}}\cdots\lambda_{j_{i_m}}^{-n}$, so that the action is given explicitly as the following diagonal matrix

\noindent $\text{diag}(\sum_{i_1}^{2rg}(\lambda_{j_{i_1}}^{-n}\cdot\lambda_{j_{i_2}}^{-n}\cdots\lambda_{j_{i_m}}^{-n}),\sum_{i_2}^{2rg}(\lambda^{-n}_{j_{i_1}}\lambda^{-n}_{j_{i_2}}\cdots\lambda_{j_{i_m}}^{-n}), ...,\sum_{i_m}^{2rg}(\lambda^{-n}_{j_{i_1}}\cdots\lambda_{j_{i_m}}^{-n})).$

\noindent The trace for this action can hence be described by

$$\sum\limits_{1\leq i_1<i_2<\cdots<i_m\leq 2rg}\lambda^{-n}_{j_{i_1}}\cdot\lambda^{-n}_{j_{i_2}}\cdots\lambda_{j_{i_m}}^{-n}$$
Hence the trace of the action of $\psi^n$ on the generators of the $l$-adic cohomology $H^*(\BBBX, \qa)$ is given explicitly by the expression
$$\text{tr}((\psi^*)^n; H^*(\BBBX; \qa)=r+rq^{-n}+\sum\limits_{1\leq i_1<i_2<\cdots<i_m\leq 2rg}\lambda^{-n}_{j_{i_1}}\cdot\lambda^{-n}_{j_{i_2}}\cdots\lambda_{j_{i_m}}^{-n}.$$

\noindent From this we get an explicit expression for the formal trace of the iteration $(\psi^*)^n$.
Namely, since $|\lambda_{j_{i_t}}|=q^{\frac{1}{2}}$ for every subscript 
$j_{i_t}$, we get

$$
\begin{aligned}
\big|\sum\limits_{1\leq i_1<i_2<\cdots<i_m\leq 2rg}(\lambda^{-n}_{j_{i_1}}\cdots\lambda^{-n}_{j_{i_m}})\big |& \leq\sum\limits_{1\leq i_1<i_2<\cdots<i_m\leq 2rg}1\cdot 
 q^{-\frac{mn}{2}}\\
& =q^{-\frac{mn}{2}}(\sum\limits_{i_1=1}^{i_2-1} 1)(\sum\limits_{i_2=i_1+1}^{i_3-1} 1)\cdots(\sum\limits_{i_m=i_{m-1}+1}^{2rg} 1)\\
& =(i_2-1)(i_3-i_2-1)\cdots(2rg-i_m)q^{-\frac{mn}{2}}\\
\end{aligned}
$$

\noindent As $1\leq i_1<i_2<\cdots< i_{m-1}<i_m\leq 2rg$, we can bound the scalar $(i_2-1)(i_3-i_2-1)\cdots(2rg-i_m)$ by a natural number independent of $m$, hence we see that the formal trace of the iteration $(\psi^*)^n$
$$\text{tr}((\psi^*)^n; H^*(\BBBX; \qa))= \sum_{m\geq 0}(-1)^i\text{tr}((\psi^*)^n; H^m(\BBBX; \qa)$$ 
converges absolutely for all $n>0$. Now the question arises if there is a Lefschetz type trace formula for the composition $(\psi^*)^n$ and what does the formal trace count. 

More generally, let us now look at compositions of various iterations of the induced arithmetic and the absolute arithmetic Frobenius morphisms.  Recall that from theorem 3.4 we know also the action of the absolute arithmetic Frobenius morphism 
$$\text{Frob}_{\BBBX}^*: H^*(\BBBX; \qa)\rightarrow H^*(\BBBX; \qa)$$
on the $l$-adic cohomology algebra of the moduli stack $\BBX$ which is given by:
$$
\begin{aligned}
\text{Frob}_{\BBBX}^*(a_i)&=\varepsilon_iq^{-d_i}a_i, \\
\text{Frob}_{\BBBX}^*(b_i^{(j)})&=\lambda_j\varepsilon_iq^{-d_i}\cdot b_i^{(j)}\hskip2mm (j=1,...,2g),\\
\text{Frob}_{\BBBX}^*(f_i)&=\varepsilon_iq^{-d_i+1}\cdot f_i=\varepsilon_iq^{-d_i}qf_i.
\end{aligned}
$$
Let us denote $\eta=\text{Frob}_{\BBBX}$. Thus when we consider $s$-times the action of $\eta^*$ we have

$$
\begin{aligned}
(\eta^*)^s(a_i)&=\varepsilon_i^s(q^{-d_i})^sa_i,\\
(\eta^*)^s(b_i^{(j)})&=\lambda_j^s\varepsilon_i^s(q^{-d_i})^s\cdot b_i^{(j)}\hskip2mm (j=1,...,2g),\\
(\eta^*)^s(f_i))&=\varepsilon_i^s(q^{-d_i})^sq^s\cdot f_i.
\end{aligned}
$$

For $s,n\in\Bbb N$, we will now study the composition $(\eta^*)^s\circ(\psi^*)^n$ of the iterations on the $l$-adic cohomology
$$(\eta^*)^s\circ(\psi^*)^n: H^*(\BBBX; \qa)\rightarrow H^*(\BBBX; \qa).$$
From the above individual computations we obtain for the action of the composition  $(\eta^*)^s\times(\psi^*)^n$ on the Atiyah-Bott classes explicitly 
$$
\begin{aligned}
(\eta^*)^s\circ(\psi^*)^n(a_i)&=\varepsilon_i^s(q^{-d_i})^sa_i,\\
(\eta^*)^s\circ(\psi^*)^n(b_i^{(j)})&=\lambda_j^{s-n}\varepsilon_i^s(q^{-d_i})^s\cdot b_i^{(j)}\hskip2mm (j=1,...,2g),\\
(\eta^*)^s\circ(\psi^*)^n(f_i)&=q^{s-n}(q^{-d_i})^s\varepsilon^s_if_i.
\end{aligned}
$$
So the formal trace is given by
$$\text{tr}((\eta^*)^s\circ(\psi^*)^n; H^*(\BBBX; \qa))=\sum\limits_{i\geq 0}(-1)^i \text{tr}((\eta^*)^s\times(\psi^*)^n; H^i(\BBBX; \qa)).$$
The action of $(\eta^*)^s\circ(\psi^*)^n$ on $\qa[a_1, \ldots, a_r]$ is represented by the diagonal matrix $I_r$ of size $r\times r$, 
$$I_r=\text{diag}_r=(\varepsilon^s_1(q^{-d_1})^s,...,\varepsilon^s_r(q^{-d_r})^s),$$ 

\noindent and the action on $\qa[f_1, \ldots, f_r]$ is given by the diagonal matrix $q^{s-n}\cdot I_r$. Furthermore, the exterior algebra has a decomposition as
\vspace*{0.8cm}
$$\Lambda_{\qa}^*({b_i^{(j)}}: i=1,...,r, j=1,...,2g)=\bigoplus\limits_{m\geq 0}\Lambda^m_{\qa}({b_i^{(j)}}: i=1,...,r, j=1,...,2g).$$

\noindent  For $i=1,...,r, j=1,...,2g$ denote $v_{2g(i-1)+j}:=b_i^{(j)}$, then for $1\leq i_1<i_2<\cdots<i_m\leq 2rg$, the elements $v_{i_1}\wedge\cdots\wedge v_{i_m}$ generate the exterior algebra in degree $m$, $\Lambda^m_{\qa}<{b_i^{(j)}}>$, where for every subscript $i_k$ there exists a subscript $l_{i_k}\in\{1,...,r\}$ and a supraindex $j_{i_k}\in\{1,...,2g\}$ 
such that $v_{i_k}=b^{j_{i_k}}_{l_{i_k}}$.

\noindent \vspace*{0.3cm}The action 

$$(\eta^*)^s:\Lambda^m_{\qa}({b_i^{(j)}})\longrightarrow\Lambda^m_{\qa}({b_i^{(j)}})$$ 
\vspace{0.3cm}

\noindent on the elements $v_{i_1}\wedge\cdots\wedge v_{i_m}\in\Lambda^m_{\qa}({b_i^{(j)}})$ is given by

\vspace*{0.5cm}
$$
\begin{aligned}
(\eta^*)^s(v_{i_1}\wedge\dots v_{i_m})&=(\eta^*)^s((v_{i_1}))\wedge\dots\wedge(\eta^*)^s((v_{i_m}))\\
&= (\eta^*)^s(v_{i_1})\wedge\dots\wedge(\eta^*)^s(v_{i_m}))\\
&= (\eta^*)^s(b^{(j_{i_1})}_{l_{i_1}})\wedge\dots\wedge(\eta^*)^s(b^{(j_{i_m})}_{i_{i_m}})\\
&=(\lambda_{j_{i_1}}^s\varepsilon_{l_{i_1}}^s(q^{-d_{l_{i_1}}})^s)\cdots(\lambda_{j_{i_m}}^s\varepsilon_{l_{i_m}}^s(q^{-d_{l_{i_m}}})^s)b^{(j_{i_1})}_{l_{i_1}}\wedge\dots\wedge b^{(j_{i_m})}_{l_{i_m}}\\
&=(\lambda_{j_{i_1}}\varepsilon_{l_{i_1}}q^{-d_{l_{i_1}}})^s\cdots(\lambda_{j_{i_m}}\varepsilon_{l_{i_m}}q^{-d_{l_{i_m}}})^s v_{i_1}\wedge\dots\wedge v_{i_m}.
\end{aligned}
\vspace*{1cm}$$

\noindent Since we have already derived the action of the induced arithmetic Frobenius morphism $\psi^*$ on the exterior algebra, 
the action of the composition $(\eta^*)^s\circ(\psi^*)^n$ on the exterior algebra of degree $m$ is therefore given as

$$(\eta^*)^s\circ(\psi^*)^n(v_{i_1}\wedge\dots\wedge v_{i_m})=(\prod\limits_{t=1}^m\lambda_{j_{i_t}}^{(s-n)}\varepsilon_{l_{i_t}}^s(q^{-d_{l_{i_t}}})^s) v_{i_1}\wedge\dots\wedge v_{i_m},$$

\noindent and this action is represented again by a diagonal matrix of size $\binom{2rg}{m}\times\binom{2rg}{m}$. 

\noindent Indeed, for every subscript $1\leq i_1<i_2<\cdots<i_m\leq 2rg$  and every element $v_{i_1}\wedge\dots\wedge v_{i_m}\in \Lambda^m_{\qa}({b_i^{(j)}})$ we obtain a scalar $\prod\limits_{t=1}^m\lambda_{j_{i_t}}^{(s-n)}\varepsilon_{l_{i_t}}^s(q^{-d_{l_{i_t}}})^s$ such that the action is given explicitly in diagonal form as
$$
\begin{aligned}
\text{diag}(\sum\limits_{i_1}^{2rg}(\prod\limits_{t=1}^m\lambda_{j_{i_t}}^{(s-n)}\varepsilon_{l_{i_t}}^s(q^{-d_{l_{i_t}}})^s),\sum\limits_{i_2}^{2rg}(\prod\limits_{t=1}^m\lambda_{j_{i_t}}^{(s-n)}&\varepsilon_{l_{i_t}}^s(q^{-d_{l_{i_t}}})^s),...\\
&...,\sum\limits_{i_m}^{2rg}(\prod\limits_{t=1}^m\lambda_{j_{i_t}}^{(s-n)}\varepsilon_{l_{i_t}}^s(q^{-d_{l_{i_t}}})^s).
\end{aligned}
$$

\noindent The trace for this action can hence be described explicitly by the following sum

$$\sum\limits_{1\leq i_1<i_2<\cdots<i_m\leq 2rg}(\prod\limits_{t=1}^m\lambda_{j_{i_t}}^{(s-n)}\varepsilon_{l_{i_t}}^s(q^{-d_{l_{i_t}}})^s).$$

So finally the trace of the action of the composition on the generators of the $l$-adic cohomology $H^*(\BBBX, \qa)$ is given explicitly by the expression

$$
\begin{aligned}
\text{tr}(\psi^n; H^*(\BBBX; \qa))=\sum\limits_{k=1}^r&\varepsilon^s_k(q^{-d_k})^s +q^{s-n}(\sum\limits_{k=1}^r\varepsilon^s_k(q^{-d_k})^s)+\\
&+\sum\limits_{1\leq i_1<i_2<\cdots<i_m\leq 2rg}(\prod\limits_{t=1}^m\lambda_{j_{i_t}}^{(s-n)}\varepsilon_{l_{i_t}}^s(q^{-d_{l_{i_t}}})^s).
\end{aligned}
$$

\noindent In theorem 1.5, $G$ is assumed to be a semisimple algebraic group over the field $\Bbb F_q$.  In this case there exist roots of unity $\varepsilon_1,...,\varepsilon_r$ and integers  $d_1,...,d_r$ such that $d_i>1$ for $i=1,...,r$,  therefore we have that $\sum_{t=1}^md_{l_{i_t}}>m$, that is, $-m>-\sum_{t=1}^md_{l_{i_t}}$, thus $$q^{-sm}>(q^{-\sum_{t=1}^md_{l_{i_t}}})^s.$$  Also we have that $|\lambda_{j_{i_t}}|=q^{\frac{1}{2}}$ for every subscript $j_{i_t}$, thus we have for the formal trace of the composition

$$
\begin{aligned}
\big|\text{tr}((\eta^*)^s&\circ(\psi^*)^n; H^*(\BBBX; \qa))\big |\leq\\&\leq \sum\limits_{k=1}^r (q^{-d_k})^s+q^{s-n}(\sum\limits_{k=1}^r (q^{-d_k})^s) +\sum\limits_{1\leq i_1<i_2<\cdots<i_m\leq 2rg}q^{\frac{m(s-n)}{2}}\cdot\prod\limits_{t=1}^m(q^{-d_{l_{i_t}}})^s\\
&= \sum\limits_{k=1}^r (q^{-d_k})^s+q^{s-n}(\sum\limits_{k=1}^r (q^{-d_k})^s) +q^{\frac{m(s-n)}{2}}\cdot[\sum\limits_{1\leq i_1<i_2<\cdots<i_m\leq 2rg}(q^{-\sum_{t=1}^md_{l_{i_t}}})^s]\\
&=\sum\limits_{k=1}^r (q^{-d_k})^s+q^{s-n}(\sum\limits_{k=1}^r (q^{-d_k})^s)+\\
&\hspace*{0.8cm}+q^{\frac{m(s-n)}{2}}(\sum\limits_{i_1=1}^{i_2-1}(q^{-\sum_{t=1}^md_{l_{i_t}}})^s)(\sum\limits_{i_2=i_1+1}^{i_3-1}(q^{-\sum_{t=1}^md_{l_{i_t}}})^s) \cdots(\sum\limits_{i_m=i_{m-1}+1}^{2rg}(q^{-\sum_{t=1}^md_{l_{i_t}}})^s)\\
&\leq\sum\limits_{k=1}^r (q^{-d_k})^s+q^{s-n}(\sum\limits_{k=1}^r (q^{-d_k})^s) +q^{\frac{m(s-n)}{2}}(i_2-1)(i_3-i_2-1)\cdots(2rg-i_m)q^{m(-ms)}\\
&=\sum\limits_{k=1}^r (q^{-d_k})^s+q^{s-n}(\sum\limits_{k=1}^r (q^{-d_k})^s) +q^{\frac{m(s-n)}{2}}q^{-m^2s}(i_2-1)(i_3-i_2-1)\cdots(2rg-i_m).
\end{aligned}
$$

\vspace*{0.3cm}
\noindent Since $1\leq i_1<i_2<\cdots< i_{m-1}<i_m\leq 2rg$, we can find a natural number independent of $m$ to bound the scalar $(i_2-1)(i_3-i_2-1)\cdots(2rg-i_m)$.
Hence we see from the convergence of geometric series that the formal trace of the composition $(\eta^*)^s\circ(\psi^*)^n$
$$\text{tr}((\eta^*)^s\circ(\psi^*)^n; H^*(\BBBX; \qa))= \sum_{m\geq 0}(-1)^i\text{tr}((\eta^*)^s\circ(\psi^*)^n; H^m(\BBBX; \qa))$$ 
\noindent converges absolutely for all $n>s$.

\begin{remark}
For the absolute arithmetic Frobenius a Grothendieck-Lefschetz trace formula was established by Behrend \cite{Be, Be2} and it calculates the mass or cardinality of the groupoid of rational points of the moduli stack $\BBO_{G, X}$. In fact, as Gaitsgory and Lurie  \cite{GaLu} have recently shown in their proof of Weil's Tamagawa conjecture for function fields that the conjecture reduces to a statement on the $l$-adic cohomology of the moduli stack $\BBO_{G, X}$ and the Grothendieck-Lefschetz trace formula calculates the mass of rational points of the moduli stack in terms of local data involving the Tamagawa number of the generic fiber of $G$. The formal traces for the induced arithmetic Frobenius or its composition with the absolute arithmetic Frobenius as outlined above though behave rather different and don't fit into a similar Grothendieck-Lefschetz trace formula. In fact, the derivation of the Grothendieck-Lefschetz trace formula for the absolute arithmetic Frobenius by Behrend uses particular Harder-Narasimhan type filtrations of open substacks of the moduli stack and the actions of the arithmetic Frobenius morphism $\eta$ on the cohomology algebras of these substacks, but the induced arithmetic Frobenius morphism $\varphi$ does not respect these open substacks and filtrations in general. Here the behaviour is rather mysterious and it would be interesting to analyse the formal trace when convergent and what arithmetic relevance it has. In the case of $G=SL_n$, i.e. for moduli stacks of vector bundles with trivial determinant this was addressed in \cite{NS}. The systematic general approach on the construction and geometry of trace formulas in this context as provided by Frenkel and Ng\^o in \cite{FrBCN} looks also relevant here.
\end{remark}

\section*{Acknowledgements}

\noindent The first author is supported by Grant PAPIIT UNAM IN100723  ``Curvas, sistemas lineales en superficies proyectivas y fibrados vectoriales" and CONACyT, M\'exico A1-S-9029 ``Moduli de curvas y curvatura en $A_g$". The second author would like to thank Centro de Ciencias Matem\'aticas (CCM), UNAM Campus Morelia, for the wonderful hospitality and financial support through the Programa de Estancias de Investigaci\'on (PREI) de la Direcci\'on General Asuntos del Personal Acad\'emico, DGAPA-UNAM.  He also likes to thank N. Nitsure (TIFR Mumbai) for his interest and support and U. Stuhler (G\"ottingen) for introducing him to this exciting circle of ideas.


\begin{thebibliography}{99}

\bibitem{AB} M. F. Atiyah, R. Bott, The Yang-Mills equation over Riemann surfaces, {\em Philosoph. Trans. Roy. Soc. London} {\bf 308 A} (1982), 523--615.

\bibitem{BD} A. Beilinson, V. Drinfeld, {\it Quantization of Hitchin's integrable systems and Hecke eigensheaves}, preprint, University of Chicago.

\bibitem{Be} K. Behrend, {\em The Lefschetz Trace Formula for the Moduli Stack of Principal Bundles}, PhD thesis, Berkeley, 1990, 96 pp.

\bibitem{Be2} K. Behrend, The Lefschetz trace formula for algebraic stacks, {\em Invent. Math.} {\bf 112} (1993), 127--149.

\bibitem{Be3} K. Behrend, Derived $l$-adic categories for algebraic stacks, {\em Mem. Amer. Math. Soc.} {\bf 163} (2003), no. 774, viii+93 pp.
 
\bibitem{Bo} A. Borel, Sur la cohomologie des espaces fibr\'es principaux et des espaces homog\`enes de groupes de Lie compacts, {\em Ann. of Math.} (2) {\bf 57} (1953), 115--207.

\bibitem{DrNa} J.-M. Drezet, M. S. Narasimhan, Groupes de Picard des vari\'et\'es de modules de fibr\'es semi-stables sur les courbes alg\'ebriques, {\em Invent. Math.} 97 (1989), 53--94.

\bibitem{DS} V. G. Drinfeld, C. Simpson, B-Structures on $G$-bundles and local triviality, {\em Math. Res. Letters} {\bf 2} (1996), 823--829.

\bibitem{FK} E. Freitag, R. Kiehl, {\em \'Etale cohomology and the Weil conjecture}, Erg. der Mathematik 3. Folge, Band {\bf 13}, Springer-Verlag, Berlin, 1988.

\bibitem{Fr} E. Frenkel, Gauge Theory and Langlands Duality, S\'eminaire Bourbaki 2009.

\bibitem{FrBCN} E. Frenkel, Bao Ch\^au Ng\^o, Geometrization of trace formulas, Bull. Math. Sci. (1) (2011), 129--199.

\bibitem{GaLu} D. Gaitsgory, J. Lurie, {\em Weil's Conjecture for Function Fields: Volume I}, Annals of Mathematics Studies, vol. {\bf 199}, Princeton University Press, Princeton, N. J., 2019.

\bibitem{He} J. Heinloth, Lectures on the moduli stack of vector bundles on a curve. Schmitt, A. (eds.), {\em Affine Flag Manifolds and Principal Bundles}, Trends in Mathematics. Birkh\"auser, Basel (2010), 123--153.

\bibitem{HeSch} J. Heinloth, A. H. W. Schmitt, The cohomology rings of moduli stacks of principal bundles over curves, {\em Doc. Math.} {\bf 15} (2010), 423--488.

\bibitem{KW} A. Kapustin, E. Witten, Electro-magnetic Duality and the Geometric Langlands Programm, {\it Comm. in Number Theory and Physics} {\bf 1} (2007) 1--236.

\bibitem{LaSo} Y. Laszlo, C.  Sorger, The line bundles on the moduli of parabolic $G$-bundles over curves and their sections, {\em Ann. Scient. \'Ecole Norm. Sup.} $4^{e}$ s\'erie, t. {\bf 30} (1997), 499--525.

\bibitem{LO1} Y. Laszlo, M. Olsson, The six operations for sheaves on Artin stacks I: finite coefficients, {\em Publ. Math. IHES} {\bf 107} (2008), 109--168. 

\bibitem{LO2} Y. Laszlo, M. Olsson, The six operations for sheaves on Artin stacks II: adic coefficients, {\em Publ. Math. IHES} {\bf 107} (2008), 169--210.

\bibitem{LMB} G. Laumon, L. Moret-Bailly, {\em Champs alg{\'e}briques},  Ergeb. der Mathematik {\bf 39}, Springer-Verlag, Berlin 2000.

\bibitem{M} J. S. Milne, {\em \'Etale cohomology}, Princeton Math. Series, Vol. {\bf 33}, Princeton University Press, Princeton, N.J. 1980.

\bibitem{MFK} D. Mumford, J. Fogarthy and F. Kirwan, {\em Geometric Invariant Theory}, Ergeb. der Mathematik {\bf 34}, 3rd ed., Springer-Verlag, Berlin (1992).

\bibitem{N} F. Neumann, {\em Algebraic stacks and moduli of vector bundles}, Publica\c{c}oes Matematicas {\bf 36}, IMPA Research Monographs, Rio de Janeiro, 2nd ed. 2011.

\bibitem{NS} F. Neumann, U. Stuhler, Moduli stacks of vector bundles and Frobenius morphisms, Algebra and Number Theory, Proceedings of the Silver Jubilee Conference (Hyderabad, India, 2003), Delhi (2005), 126--146.

\bibitem{Ol} M. Olsson, {\em Algebraic Spaces and Algebraic Stacks}, Colloquium Publ. Vol. {\bf 62}, American Mathematical Society, Providence, Rhode Island 2016.

\bibitem{Ra} S. Ramanan, The moduli spaces of vector bundles over an algebraic curve. {\em Math. Ann.} 200 (1973), 69--84.

\bibitem{So} C. Sorger, Lectures on moduli of principal $G$-bundles over algebraic curves. {\it School on Algebraic Geometry (Trieste, 1999)}, ICTP Lecture Notes, Vol. 1, Abdus Salam Int. Cent. Theor. Phys. Trieste (2000), 1--57.

\bibitem{St} R. Steinberg, Endomorphisms of linear algebraic groups, {\it Mem. Amer. Math. Soc.} {\bf 80} (1968).

\bibitem{Su} S. Sun, L-Series of Artin stacks over finite fields, {\it Algebra and Number Theory} Vol. {\bf 6}, No. 1 (2012), 47--122.

\bibitem{T} C. Teleman, Borel-Weil-Bott theory on the moduli stack of $G$-bundles over a curve, {\em Invent. math.} {\bf 134} (1998), 1--57.

\end{thebibliography}
\end{document}